\documentclass[a4paper,11pt,reqno]{amsart}
\usepackage{amssymb}

\newtheorem{theorem}{Theorem}
\newtheorem{lemma}[theorem]{Lemma}
\newtheorem{corollary}[theorem]{Corollary}
\newtheorem{proposition}[theorem]{Proposition}
\numberwithin{theorem}{section}
\numberwithin{equation}{section}

\theoremstyle{definition}
\newtheorem{definition}[theorem]{Definition}

\newtheorem{remark}[theorem]{Remark}
\newtheorem{examples}[theorem]{Example}
\newtheorem*{remark*}{Remark}

\newcommand{\Z}{{\mathbb Z}}

\newcommand{\C}{{\mathbb C}}
\newcommand{\Q}{{\mathbb Q}}

\newcommand{\cA}{{\mathcal A}}

\newcommand{\cC}{{\mathcal C}}

\newcommand{\J}{{\mathcal J}}
\newcommand{\cJ}{{\mathcal J}}
\newcommand{\cK}{{\mathcal K}}

\newcommand{\cT}{{\mathcal T}}
\newcommand{\cQ}{{\mathcal Q}}
\newcommand{\cO}{{\mathcal O}}
\newcommand{\cV}{{\mathcal V}}

\newcommand{\g}{{\mathfrak g}}
\newcommand{\gl}{{\mathfrak{gl}}}

\newcommand{\h}{{\mathfrak h}}
\newcommand{\m}{{\mathfrak m}}
\newcommand{\s}{{\mathfrak s}}
\newcommand{\rs}{{\mathfrak r}}

\newcommand{\frso}{{\mathfrak{so}}}  
\newcommand{\so}{{\mathfrak{so}}}
\newcommand{\frsp}{{\mathfrak{sp}}}

\newcommand{\frsl}{{\mathfrak{sl}}}  
\newcommand{\spl}{{\mathfrak{sl}}}

\newcommand{\anti}{{\mathfrak{skew}}}
\newcommand{\sym}{{\mathfrak{sym}}}

 \DeclareMathOperator{\eespan}{span}
 \DeclareMathOperator{\End}{End}
 \DeclareMathOperator{\Hom}{Hom}

 \DeclareMathOperator{\tr}{tr}
 \DeclareMathOperator{\Ad}{Ad}
 \DeclareMathOperator{\ad}{ad}
 \DeclareMathOperator{\Der}{Der}
  \DeclareMathOperator{\Inder}{Inder}
 \DeclareMathOperator{\Mat}{Mat}

 \def\dimens{\mathop{\rm dim}\nolimits}

\def\bigstrut{\vrule height 12pt width 0ptdepth 2pt}

\def\hreglon{\hrule height1pt}
\def\vreglon{\vrule height 12pt width1pt depth 4pt}

\def\hreglonfill{\leaders\hreglon\hfill}


\begin{document}

\title{Irreducible Lie-Yamaguti algebras}

\author{Pilar Benito}

\thanks{Supported by the
Spanish Ministerio de Ciencia y Tecnolog\'{\i}a and FEDER (BFM
2001-3239-C03-02,03) and the Ministerio de Educaci\'on y Ciencia and
FEDER (MTM 2004-08115-C04-02 and MTM 2007-67884-C04-02,03). Pilar Benito and Fabi\'an
Mart\'{\i}n-Herce also acknowledge support from the  Comunidad
Aut\'onoma de La Rioja (ANGI2005/05,06),
 and Alberto Elduque from
the Diputaci\'on General de Arag\'on (Grupo de Investigaci\'on de
\'Algebra).}

\address{Departamento de Matem\'aticas y Computaci\'on, Universidad de
La Rioja, 26004 Logro\~no, Spain}

\email{pilar.benito@unirioja.es}



\author{Alberto Elduque}

\address{Departamento de Matem\'aticas e Instituto Universitario de Matem\'aticas y Aplicaciones, Universidad de
Zaragoza, 50009 Zaragoza, Spain}

\email{elduque@unizar.es}

\author{Fabi\'an Mart\'in-Herce}

\address{Departamento de Matem\'aticas y Computaci\'on, Universidad de
La Rioja, 26004 Logro\~no, Spain}

\email{fabian.martin@dmc.unirioja.es}

\date{\today}

\subjclass[2000]{Primary 17A30, 17B60}

\keywords{Lie-Yamaguti algebra, irreducible, Tits construction}

\begin{abstract}
Lie-Yamaguti algebras (or generalized Lie triple systems) are
binary-ternary algebras intimately related to reductive homogeneous
spaces. The Lie-Yamaguti algebras which are irreducible as modules
over their Lie inner derivation algebra are the algebraic
counterpart of the isotropy irreducible homogeneous spaces.

These systems will be shown to split into three disjoint types:
adjoint type, non-simple type and generic type. The systems of the
first two types will be classified and most of them will be shown to
be related to a Generalized Tits Construction of Lie algebras.
\end{abstract}

\maketitle


\section{Introduction}

Let $G$ be a connected Lie group with Lie algebra $\g$, $H$ a closed
subgroup of $G$, and let $\h$ be the associated subalgebra of $\g$.
The corresponding homogeneous space $M=G/H$ is said to be
\emph{reductive} (\cite[\S 7]{Nom54}) in case there is a subspace
$\m$ of $\g$ such that $\g=\h\oplus\m$ and $\Ad(H)(\m)\subseteq \m$.

In this situation, Nomizu proved \cite[Theorem 8.1]{Nom54} that
there is a one-to-one correspondence between the set of all
$G$-invariant affine connections on $M$ and the set of bilinear
multiplications $\alpha:\m\times\m\rightarrow\m$ such that the
restriction of $\Ad(H)$ to $\m$ is a subgroup of the automorphism
group of the nonassociative algebra $(\m,\alpha)$.

There exist natural binary and ternary products defined in $\m$,
given by
\begin{equation}\label{eq:binter}
\begin{split}
&x\cdot y = \pi_{\m}\bigl([x,y]\bigr),\\
&[x,y,z]=\bigl[\pi_{\h}([x,y]),z],
\end{split}
\end{equation}
for any $x,y,z\in\m$, where $\pi_{\h}$ and $\pi_{\m}$ denote the
projections on $\h$ and $\m$ respectively, relative to the reductive
decomposition $\g=\h\oplus\m$. Note that the condition
$\Ad(H)(\m)\subseteq \m$ implies the condition $[\h,\m]\subseteq
\m$, the converse being valid if $H$ is connected.

There are two distinguished invariant affine connections: the
 natural connection (or canonical connection of the first kind),
which corresponds to the bilinear multiplication given by
$\alpha(x,y)=\frac{1}{2} x\cdot y$ for any $x,y\in\m$, which has
trivial torsion, and the canonical connection corresponding to the
trivial multiplication: $\alpha(x,y)=0$ for any $x,y\in\m$. In case
the reductive homogeneous space is symmetric, so $[\m,\m]\subseteq
\h$, these two connections coincide. For the canonical connection,
the torsion and curvature tensors are given on the tangent space to
the point $eH\in M$ ($e$ denotes the identity element of $G$), which
can be naturally identified with $\m$, by
\[
T(x,y)=-x\cdot y,\qquad R(x,y)z=-[x,y,z],
\]
for any $x,y,z\in \m$ (see \cite[Theorem 10.3]{Nom54}).

Moreover, Nomizu showed too that the affine connections on manifolds
wiht parallel torsion and curvature are locally equivalent to
canonical connections  on reductive homogeneous spaces.

Yamaguti \cite{Yam58} considered the properties of the torsion and
curvature of these canonical connections (or alternatively, of the
binary and ternary multiplications in \eqref{eq:binter}), and thus
defined what he called the \emph{general Lie triple systems}, later
renamed as \emph{Lie triple algebras} in \cite{Kik75}. We will
follow here the notation in \cite[Definition 5.1]{KinWei}, and will
call these systems \emph{Lie-Yamaguti algebras}:

\begin{definition}\label{df:LY}
A \emph{Lie-Yamaguti algebra} $(\m,x\cdot y,[x\,,y\,,z\,])$  ({\em
LY-algebra} for short) is a vector space $\m$ equipped with a
bilinear operation $\cdot : \m\times\m\rightarrow \m$ and a
trilinear operation $[\,,\,,\,]: \m\times\m\times\m\rightarrow \m$
such that, for all $x,y,z,u,v,w\in \m$:
\begin{enumerate}
\item[(LY1)] $x\cdot x=0$,
\item[(LY2)] $[x,x,y]=0$,
\item[(LY3)] $\sum_{(x,y,z)}\Bigl([x,y,z]+(x\cdot y)\cdot
z\Bigr)=0$,
\item[(LY4)] $\sum_{(x,y,z)}[x\cdot y,z,t]=0$,
\item[(LY5)] $[x, y,u\cdot v]=[x,y,u]\cdot v+u\cdot [x,y,v]$,
\item[(LY6)]
$[x,y,[u,v,w]]=[[x,y,u],v,w]+[u,[x,y,v],w]+[u,v,[x,y,w]]$.
\end{enumerate}
\end{definition}

\noindent Here $\sum_{(x,y,z)}$ means the cyclic sum on $x,y,z$.
\smallskip

The LY-algebras with $x\cdot y=0$ for any $x,y$ are exactly the Lie
triple systems, closely related with symmetric spaces, while the
LY-algebras with $[x,y,z]=0$ are the Lie algebras.  Less known
examples  can be found in \cite{BDP} where a detailed analysis on
the algebraic structure of LY-algebras arising from homogeneous
spaces which are quotients of the compact Lie group $G_2$ is given.
\smallskip

These nonassociative binary-ternary algebras have been treated by
several authors in connection with geometric problems on homogeneous
spaces \cite{Kik79,Kik81,Sag65,Sag68,SagWin}, but no much
information on their algebraic structure is available yet.
\smallskip

Given a Lie-Yamaguti algebra $(\m,x\cdot y,[x,y,z])$ and any two
elements $x,y\in\m$, the linear map $D(x,y):\m\rightarrow\m$,
$z\mapsto D(x,y)(z)=[x,y,z]$ is, due to (LY5) and (LY6), a
derivation of both the binary and ternary products. These
derivations will be called \emph{inner derivations}. Moreover, let
$D(\m,\m)$ denote the linear span of the inner derivations. Then
$D(\m,\m)$ is closed under commutation thanks to (LY6). Consider the
vector space $\g(\m)=D(\m,\m)\oplus\m$, and endow it  with the
anticommutative multiplication given, for any $x,y,z,t\in \m$, by:
\begin{equation}\label{eq:gm}
\begin{split}
&[D(x,y),D(z,t)]= D([x,y,z],t)+D(z,[x,y,t]),\\
&[D(x,y),z]=D(x,y)(z)=[x,y,z],\\
&[z,t]=D(z,t)+z\cdot t.
\end{split}
\end{equation}
Note that the Lie algebra $D(\m,\m)$ becomes a subalgebra of
$\g(\m)$.

Then it is straightforward \cite{Yam58} to check that $\g(\m)$ is a
Lie algebra, called the \emph{standard enveloping Lie algebra} of
the Lie-Yamaguti algebra $\m$. The binary and ternary products in
$\m$ coincide with those given by \eqref{eq:binter}, where
$\h=D(\m,\m)$.

\smallskip

As was mentioned above, the Lie triple systems are precisely those
LY-algebras with trivial binary product. These correspond to the
symmetric homogeneous spaces. Following \cite[\S 16]{Nom54}, a
symmetric homogeneous space $G/H$ is said to be \emph{irreducible}
if the action of $\ad\h$ on $\m$ is irreducible, where
$\g=\h\oplus\m$ is the canonical decomposition of the Lie algebra
$\g$ of $G$.

This suggests the following definition:

\begin{definition}\label{df:irreducible}
A Lie-Yamaguti algebra $(\m, x\cdot y,[x,y,z])$ is said to be
\emph{irreducible} if $\m$ is an irreducible module for its Lie
algebra of inner derivations $D(\m,\m)$.
\end{definition}

Geometrically, the irreducible LY-algebras correspond to the
isotropy irreducible homogeneous spaces studied by Wolf in \cite{Wolf} ``as a first step toward understanding the geometry of the riemannian homogeneous spaces''. Likewise, the classification of the irreducible LY-algebras constitutes a first step in our understanding of this variety of algebras.
Concerning the isotropy irreducible homogeneous spaces, Wolf remarks that
``the results are surprising, for there are a  large number of
nonsymmetric isotropy irreducible coset spaces $G/K$, and only a few
examples had been known before.  One of the most interesting
class is $\mathbf{SO}(\dimens K)/\ad K$ for an arbitrary compact
simple Lie group $K$''. These spaces $\mathbf{SO}(\dimens K)/\ad K$ show a clear pattern, but there appear many more examples in the classification, where no such clear pattern appears.

Here it will be shown that most of the irreducible LY-algebras follow
clear patterns if several kinds of nonassociative algebraic systems
are used, not just Lie algebras. In fact, most of the irreducible
LY-algebras will be shown, here and in the forthcoming paper
\cite{forthcoming}, to appear inside simple Lie algebras as
orthogonal complements of subalgebras of derivations of Lie and
Jordan algebras, Freudenthal triple systems and Jordan pairs.

\medskip

Let us fix some notation to be used throughout this paper. All the
algebraic systems will be assumed to be finite dimensional over an
algebraically closed ground field $k$ of characteristic $0$.
Unadorned tensor products will be considered over this ground field
$k$. Given a Lie algebra $\g$ and a subalgebra $\h$, the pair
$(\g,\h)$ will be said to be a \emph{reductive pair} (see
\cite{Sag68}) if there is a complementary subspace $\m$ of $\h$ with
$[\h,\m]\subseteq \m$. The decomposition $\g=\h\oplus\m$ will then
be called a \emph{reductive decomposition} of the Lie algebra $\g$.

In particular, given a LY-algebra $(\m, x\cdot y,[x,y,z])$, the pair
$\bigl(\g(\m),D(\m,\m)\bigr)$ is a reductive pair.

The following result is instrumental:

\begin{proposition}\label{pr:envueltasimple}
Let $\g=\h\oplus \m$ be a reductive decomposition of a simple Lie
algebra $\g$, with $\m\ne 0$. Then $\g$ and $\h$ are isomorphic,
respectively, to the standard enveloping Lie algebra and the inner
derivation algebra of the Lie-Yamaguti algebra $(\m,x\cdot
y,[x,y,z])$ given by \eqref{eq:binter}. Moreover, in case $\h$ is
semisimple and $\m$ is irreducible as a module for $\h$, either $\h$
and $\m$ are isomorphic as $\ad \h$-modules or $\m=\h^\perp$, the
orthogonal complement of $\h$ relative to the Killing form of $\g$.
\end{proposition}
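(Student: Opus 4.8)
The plan is to treat the two assertions separately, the first holding for any reductive decomposition of a simple Lie algebra and the second using the extra hypotheses on $\h$.

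For the first assertion, I would consider the linear map $\phi\colon\h\to\End(\m)$, $h\mapsto \ad(h)|_\m$, which makes sense because $[\h,\m]\subseteq\m$. By \eqref{eq:binter} the inner derivation $D(x,y)$ is exactly $\ad(\pi_\h([x,y]))|_\m=\phi(\pi_\h([x,y]))$, so that $D(\m,\m)=\phi(\h_0)$ with $\h_0:=\pi_\h([\m,\m])$. First I would show that $\phi$ is injective: if $[h,\m]=0$, then $Z=\{h\in\h:[h,\m]=0\}$ satisfies $[Z,\m]=0$ and $[Z,\h]\subseteq Z$ (a one-line Jacobi computation), so $Z$ is an ideal of $\g$ contained in $\h\subsetneq\g$, forcing $Z=0$ by simplicity. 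Next I would show $\h_0=\h$. A short Jacobi computation, using $[\h,\m]\subseteq\m$ and $[x,y]=\pi_\h([x,y])+x\cdot y$, gives $[\h,\h_0]\subseteq\h_0$; together with $[\m,\m]\subseteq\h_0\oplus\m$ and $[\h,\m]\subseteq\m$ this shows $W=\h_0\oplus\m$ is an ideal of $\g$, and since $W\supseteq\m\ne0$ and $\g$ is simple, $W=\g$, whence $\h_0=\h$. Thus $\phi$ restricts to a linear isomorphism $\h\to D(\m,\m)$, and because $\phi$ is a Lie homomorphism ($[\ad h_1,\ad h_2]|_\m=\ad([h_1,h_2])|_\m$) the map $\Phi=\phi\oplus\mathrm{id}_\m\colon\h\oplus\m\to D(\m,\m)\oplus\m$ is a linear bijection which, by comparing \eqref{eq:binter} with \eqref{eq:gm} bracket by bracket, is a Lie algebra isomorphism $\g\xrightarrow{\sim}\g(\m)$ carrying $\h$ onto $D(\m,\m)$.

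For the second assertion, let $\kappa$ be the Killing form of $\g$, which is nondegenerate. The key preliminary step is that its restriction $\kappa|_\h$ is again nondegenerate: since $\g$ is simple, $\ad_\g\colon\h\to\gl(\g)$ is faithful, and for a faithful representation of a semisimple Lie algebra the trace form is nondegenerate (the radical of $\kappa|_\h$ is an $\ad\h$-stable ideal on which, by Cartan's criterion, $\ad_\g$ would be solvable, hence zero). Consequently $\h\cap\h^\perp=0$ and, $\kappa$ being nondegenerate on $\g$, $\g=\h\oplus\h^\perp$ with $\h^\perp$ an $\ad\h$-submodule. Both $\m$ and $\h^\perp$ are then $\ad\h$-complements of $\h$, so each is isomorphic to $\g/\h$; in particular $\h^\perp\cong\m$ is irreducible. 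Now let $\pi_\h$ denote the projection of $\g$ onto $\h$ along $\m$ and consider $p=\pi_\h|_{\h^\perp}\colon\h^\perp\to\h$, an $\ad\h$-module homomorphism. If $p=0$ then $\h^\perp\subseteq\m$, and equality of dimensions forces $\m=\h^\perp$. If $p\ne0$, then $\h^\perp\cong\m$ being irreducible, Schur's lemma applied to $\h=\h_1\oplus\cdots\oplus\h_r$ (its simple ideals) yields $\m\cong\h_i$ for some $i$; for every $j\ne i$ the ideal $\h_j$ acts trivially on $\m$ (as $[\h_j,\h_i]=0$), so $[\h_j,\m]=0$ and $[\h_j,\h]\subseteq\h_j$ make $\h_j$ an ideal of $\g$, forcing $\h_j=0$ by simplicity. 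Hence $\h=\h_i$ is simple and $\h\cong\m$ as $\ad\h$-modules.

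The main obstacle I anticipate is the identification step $\h_0=\h$ in the first part, that is, checking that the inner derivations recover all of $\h$; it rests on the ideal argument above. The parallel hinge in the second part is the nondegeneracy of $\kappa|_\h$, since it is precisely what turns $\h^\perp$ into a genuine $\ad\h$-complement and makes the two-complement comparison possible. The remaining verifications (the Jacobi identities and the bracket-by-bracket check that $\Phi$ is a homomorphism) are routine.
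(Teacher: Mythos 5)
Your proof is correct and follows essentially the same route as the paper: the first part rests on the same two ideals $\pi_\h([\m,\m])\oplus\m$ and $\{h\in\h:[h,\m]=0\}$, and the second part on the nondegeneracy of the Killing form on $\h$ via Cartan's criterion followed by a projection-onto-$\h$ argument. The only cosmetic difference is that you project $\h^\perp$ onto $\h$ along $\m$ while the paper orthogonally projects $\m$ onto $\h$; the two comparisons of the complements are equivalent and lead to the same ideal-of-$\h$ dichotomy.
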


\begin{proof}
For the first assertion it is enough to note that $\pi_{\h}([\m,
\m])\oplus\m\, (=[\m,\m]+\m)$ and $\{x\in\h :[x,\m]=0\}$  are ideals
of $\g$. Hence, if $\g$ is simple, $\pi_{\h}([\m,\m])=\h$ holds, and
$\h$  embeds naturally in $D(\m,\m)\subseteq\End_k(\m)$. From here
it follows that the map $\g\to \g(\m)$  given by $h\in \h \mapsto
\ad h \mid_\m$ and $x\in \m \mapsto x$ is an isomorphism from $\g$
to $\g(\m)$ which sends $\h$ onto $D(\m,\m)$. Moreover, in case $\h$
is semisimple, $\h$ is anisotropic with respect to the Killing form
of $\g$ (by Cartan's criterion, as $\g$ is a faithful
$\ad_{\g}\h$-module), so $\g=\h\oplus \h^\perp$ and the orthogonal
projection, $\pi_\h(\m)$ from $\m$ onto $\h$ is an ideal of $\h$. By
irreducibility of $\m$, either $\pi_\h(\m)=0$ and therefore
$\m=\h^\perp$, or $\m$ is isomorphic to $\pi_\h(\m)$. In the latter
case, since the action of $\h$ on $\m$ is faithful, it follows that
$\h=\pi_\h(\m)$, as required.
\end{proof}

\bigskip

The paper is organized as follows.  Section 2 will be devoted to
establish the main structural features on Lie inner derivations and
standard enveloping Lie algebras of the irreducible LY-algebras.
These will be split into three non-overlapping types: adjoint,
non-simple and generic. The final result in this section shows that
LY-algebras of adjoint type are essentially simple Lie algebras. The
classification of the LY-algebras of non-simple type is the goal of
 the rest of the paper, while the generic type will be treated in a
forthcoming paper.  Section 3 will give examples of irreducible
LY-algebras, many of them appearing inside Lie algebras obtained
by means of the Tits construction of Lie algebras in \cite{Tits66}
in terms of composition algebras and suitable Jordan algebras. Then
in Section 4 these examples will be shown to exhaust the irreducible
LY-algebras of non-simple type.


\section{Irreducible Lie-Yamaguti algebras. Initial classification}

For irreducible LY-algebras $\m$, the irreducibility as a module for
$D(\m,\m)$, together with Schur's Lemma,  quickly leads to the
following result:

\begin{theorem}\label{th:estructura}
Let $(\m, x\cdot y,[x,y,z])$ be an irreducible  LY-algebra. Then
$D(\m,\m)$ is a semisimple and maximal subalgebra of the standard
enveloping Lie algebra $\g(\m)$. Moreover, $\g(\m)$ is simple in
case $\m$ and $D(\m,\m)$ are not isomorphic as $D(\m,\m)$-modules.
\end{theorem}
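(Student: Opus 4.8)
The plan is to establish in turn the three assertions—semisimplicity of $\h:=D(\m,\m)$, its maximality in $\g(\m)$, and simplicity of $\g(\m)$ under the stated hypothesis—exploiting throughout that $\h$ acts \emph{faithfully} on $\m$ (it is a subalgebra of $\End_k(\m)$ by construction) and \emph{irreducibly} (by the definition of irreducible LY-algebra). For semisimplicity I would let $\rs$ be the radical of $\h$. Since $\rs$ is a solvable ideal, Lie's theorem provides a common eigenvector, and the invariance lemma behind it shows the corresponding weight space is $\h$-stable; irreducibility then forces $\rs$ to act by scalars via a single weight $\lambda\colon\rs\to k$. The crucial extra ingredient, special to LY-algebras, is that every element of $\h$—a linear combination of the $D(x,y)$—is a derivation of \emph{both} the binary and the ternary products. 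A derivation acting as a scalar $c\,\mathrm{Id}_\m$ satisfies $c\,(x\cdot y)=2c\,(x\cdot y)$ and $c\,[x,y,z]=3c\,[x,y,z]$, hence $c(x\cdot y)=0=c[x,y,z]$; a nonzero $c$ would force both products to vanish, whence $\h=D(\m,\m)=0$ and the claim is trivial. So $\lambda=0$, $\rs$ acts as $0$, and faithfulness gives $\rs=0$: thus $\h$ is semisimple.

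Maximality is the easy part. If $\h\subsetneq\mathfrak{l}\subseteq\g(\m)$ is an intermediate subalgebra, then since $\mathfrak{l}\supseteq\h$ it respects the decomposition $\g(\m)=\h\oplus\m$, i.e. $\mathfrak{l}=\h\oplus W$ with $W=\mathfrak{l}\cap\m$. Because $[\h,\m]\subseteq\m$ by \eqref{eq:gm} and $[\h,W]\subseteq\mathfrak{l}$, the subspace $W$ is an $\h$-submodule of $\m$; irreducibility forces $W=0$ (so $\mathfrak{l}=\h$) or $W=\m$ (so $\mathfrak{l}=\g(\m)$), and $\h$ is maximal.

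For simplicity, assume $\m\not\cong\h$ as $\h$-modules and let $\mathfrak{i}\neq 0$ be an ideal of $\g(\m)$. Every projection along $\g(\m)=\h\oplus\m$ is $\h$-equivariant, so $\pi_\m(\mathfrak{i})$ is an $\h$-submodule of $\m$. It cannot be $0$, for that would put $\mathfrak{i}\subseteq\h$, whence $[\mathfrak{i},\m]\subseteq\mathfrak{i}\cap\m=0$ and faithfulness would give $\mathfrak{i}=0$. Hence $\pi_\m(\mathfrak{i})=\m$, and it remains to inspect $\mathfrak{i}\cap\m$, again $0$ or $\m$. If $\mathfrak{i}\cap\m=\m$, then $\m\subseteq\mathfrak{i}$, so $D(z,t)=[z,t]-z\cdot t\in\mathfrak{i}$ for all $z,t$, giving $\h=D(\m,\m)\subseteq\mathfrak{i}$ and $\mathfrak{i}=\g(\m)$.

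The genuine obstacle is the remaining case $\mathfrak{i}\cap\m=0$ with $\pi_\m(\mathfrak{i})=\m$, where only the full ideal structure, not the mere module structure, can produce a contradiction. Here $\mathfrak{i}\cap\h$ acts trivially on $\m$, since $[\m,\mathfrak{i}\cap\h]\subseteq\mathfrak{i}\cap\m=0$, so it vanishes; thus $\pi_\m|_{\mathfrak{i}}$ is an isomorphism and $\mathfrak{i}=\{\phi(v)+v:v\in\m\}$ is the graph of an injective $\h$-module map $\phi\colon\m\to\h$. Then $\phi(\m)$ is a nonzero ideal of the semisimple $\h$, irreducible and isomorphic to $\m$, hence a single simple summand $\h_1\cong\m$. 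I would finish by expanding $[\phi(v)+v,\,\phi(w)+w]\in\mathfrak{i}$ and comparing $\h$-components: the requirement that this bracket again lie on the graph forces $D(v,w)\in\phi(\m)=\h_1$ for all $v,w$, whence $\h=D(\m,\m)\subseteq\h_1$ and $\h=\h_1\cong\m$, contradicting the hypothesis. Therefore no such $\mathfrak{i}$ exists, every nonzero ideal meets $\m$ in all of $\m$, and $\g(\m)$ is simple. I expect this last bracket computation, and the clean extraction of the consequence that each $D(v,w)$ lands in $\h_1$, to be where the real work lies.
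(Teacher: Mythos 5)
Your proof is correct, and its overall architecture --- maximality from the decomposition of an intermediate subalgebra, semisimplicity by killing the scalar action of the radical, simplicity by showing a proper ideal projects onto a simple ideal of $\h=D(\m,\m)$ isomorphic to $\m$ --- coincides with the paper's, but two local steps are carried out by genuinely different means. For semisimplicity, the paper first quotes the fact that a Lie algebra with a faithful irreducible module is reductive with at most one-dimensional centre, and then eliminates a central $z$ acting as $\alpha\,\mathrm{Id}$ by noting that $\ad_{\g(\m)}z$ maps $[x,y]$ to $2\alpha[x,y]$ while $2\alpha$ is not an eigenvalue of $\ad_{\g(\m)}z$; you instead apply Lie's theorem to the radical and kill the scalar $c$ by using that every inner derivation is a derivation of \emph{both} products, so $c=2c$ on $\m\cdot\m$ and $c=3c$ on the ternary products. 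Your mechanism is intrinsic to the LY-structure and bypasses the reductivity lemma, while the paper's exploits the single bracket $[x,y]=D(x,y)+x\cdot y$, which packages both products at once; the two are close cousins but not the same computation. For simplicity, both arguments reduce to a proper ideal $I$ with $I\cap\m=0=I\cap D(\m,\m)$ whose projection onto $\h$ is a minimal ideal $P\cong\m$; the paper finishes by writing $\h=P\oplus Q$ and deducing from $[P,Q]=0$ and $P\cong\m$ that $Q$ annihilates $\m$, hence $Q=0$, whereas you finish by expanding $[\phi(v)+v,\phi(w)+w]$ on the graph and reading off $D(v,w)\in P$, hence $\h=D(\m,\m)\subseteq P$. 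Both closings are valid and of comparable length; yours has the minor advantage of not invoking the already-proved maximality (which the paper uses to get $\g(\m)=\h\oplus I$), since you obtain $\pi_\m(I)=\m$ directly from irreducibility. The only point to watch, which affects the paper's proof equally, is the degenerate possibility $D(\m,\m)=0$ (forcing $\dim\m=1$), which both arguments implicitly set aside.
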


\begin{proof}
Any subalgebra $M$ of $\g(\m)$ containing $D(\m,\m)$ decomposes as
$M=D(\m,\m) \oplus (M \cap \m)$, thus $M=D(\m,\m)$ or $\g(\m)$ by
the irreducibility of $\m$. Hence $D(\m,\m)$ is a maximal
subalgebra. The irreducibility of $\m$ also implies  that $D(\m,\m)$
is a reductive algebra with $\dimens Z(D(\m,\m)) \le 1$ (see
\cite[Proposition 19.1]{Hum72}). If $Z(D(\m,\m))= Fz$, Schur's Lemma
shows that there is a scalar $\alpha\in k$ such that  $\ad_{\g(\m)}z
\mid_{\m}= \alpha Id$ holds. In this case, for any $x,y \in \m$ we
have
\begin{equation}\label{eq:thestruc1}
\ad_{\g(\m)}z([x,y])=2 \alpha [x,y]
\end{equation}
If $\alpha \neq 0$, since $2 \alpha$ is not an eingenvalue of
$\ad_{\g(\m)}z$, from \eqref{eq:thestruc1} it follows that
$[\m,\m]=0$, so $D(\m,\m)=0$, a contradiction. Hence $\alpha=0$
which implies $z=0$ because $\m$ is a faithful module for
$D(\m,\m)$, and therefore $D(\m,\m)$ is semisimple.

Finally, if $\m$ is not the adjoint module for $D(\m,\m)$, given a
proper ideal $I$ of $\g(\m)$, we have $I \cap \m =0$: otherwise, $I
\cap \m =\m$ and then $\g(\m)= \m + [\m,\m] \subseteq I$, a
contradiction. Hence $[I\cap D(\m,\m),\m]=0$ and therefore $I\cap
D(\m,\m)=0$. By maximality of $D(\m,\m)$, $\g(\m)$ can be decomposed
as
\begin{equation}\label{eq:thestruc2}
 \g(\m)=D(\m,\m) \oplus I=D(\m,\m) \oplus \m
\end{equation}
thus $I $ is isomorphic to $\m$ as $D(\m,\m)$-modules. From
\eqref{eq:thestruc2}, $\m \oplus I$ is a $D(\m,\m)$-module
isomorphic to $\m \oplus \m$ and it is easily checked that $P=(\m
\oplus I)\cap D(\m,\m)$ is a nonzero ideal of $D(\m,\m)$ isomorphic
to $\m$. So that $D(\m,\m)=P \oplus Q$ (direct sum of ideals). Now,
as $[P,Q]=0$ and $P$ is isomorphic to  $\m$ as $D(\m,\m)$-modules,
$[Q,\m]=0$ follows, and therefore, since $\m$ is a faithful module,
$Q=0$ and this contradicts the fact that $\m$ is not the adjoint
module for $D(\m,\m)$.
\end{proof}

The previous theorem points out two different situations depending
on the LY-algebra module behavior. This observation, together with
Proposition \ref {pr:envueltasimple}, leads to the following
definition and structure result:

\begin{definition}\label{df:LYadj}
A LY-algebra $\m$ is said to be of {\em adjoint type} if $\m$ is the
adjoint module for the inner derivation algebra $D(\m,\m)$.
\end{definition}

\begin{corollary}\label{co:nonadj}
The irreducible LY-algebras which are not of adjoint type are the
orthogonal subspaces of their inner derivation algebras relative to
the Killing form of their standard enveloping Lie algebras. In
particular, these irreducible LY-algebras are contragredient modules
for $D(\m,\m)$. \hfill $\square$
\end{corollary}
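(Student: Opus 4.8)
The plan is to derive the statement directly from Theorem~\ref{th:estructura} and Proposition~\ref{pr:envueltasimple}, so that the only genuine work lies in the passage to the contragredient module.

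First I would record the reduction to the simple case. By Definition~\ref{df:LYadj}, saying that $\m$ is \emph{not} of adjoint type means precisely that $\m$ and $D(\m,\m)$ are not isomorphic as $D(\m,\m)$-modules. Theorem~\ref{th:estructura} then yields at once that $D(\m,\m)$ is semisimple and that the standard enveloping algebra $\g(\m)$ is simple. Thus $\g(\m)=D(\m,\m)\oplus\m$ is a reductive decomposition of a simple Lie algebra in which $\h:=D(\m,\m)$ is semisimple and $\m$ is an irreducible nonzero $\h$-module.

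Next I would invoke Proposition~\ref{pr:envueltasimple} for the pair $\bigl(\g(\m),\h\bigr)$. Its final assertion provides the dichotomy: either $\h$ and $\m$ are isomorphic as $\ad\h$-modules, or $\m=\h^\perp$, the orthogonal complement relative to the Killing form of $\g(\m)$. The first alternative says exactly that $\m$ is the adjoint module for $D(\m,\m)$, which is excluded by hypothesis; hence $\m=\h^\perp$. This is the first half of the statement.

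For the contragredient claim, let $\kappa$ denote the Killing form of $\g(\m)$, which is nondegenerate since $\g(\m)$ is simple. From $\m=\h^\perp$ we have $\kappa(\h,\m)=0$, and because $\kappa$ is nondegenerate on $\g(\m)=\h\oplus\m$ it follows that the restriction of $\kappa$ to $\m\times\m$ is nondegenerate. As $\kappa$ is invariant, it is in particular invariant under the action of $\h=D(\m,\m)$, so the linear map $\m\to\m^*$, $x\mapsto\kappa(x,\cdot)\vert_\m$, intertwines the $D(\m,\m)$-action on $\m$ with the contragredient action on $\m^*$; being bijective by nondegeneracy, it is an isomorphism of $D(\m,\m)$-modules. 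Hence $\m$ is isomorphic to its dual $\m^*$, i.e. it is self-contragredient, as asserted. There is no serious obstacle here, since both structural inputs are already available: the corollary is essentially the conjunction of the ``$\g(\m)$ simple'' clause of Theorem~\ref{th:estructura} with the dichotomy in Proposition~\ref{pr:envueltasimple}. The only point requiring a line of argument is the nondegeneracy of $\kappa\vert_{\m\times\m}$, which is immediate from $\m=\h^\perp$ together with the nondegeneracy of $\kappa$ on the whole of $\g(\m)$.
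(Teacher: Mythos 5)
Your argument is correct and is exactly the one the paper intends: the corollary is stated without proof precisely because it is the conjunction of the simplicity clause of Theorem~\ref{th:estructura} with the dichotomy in Proposition~\ref{pr:envueltasimple}, plus the standard observation that the nondegenerate invariant form $\kappa\vert_{\m\times\m}$ identifies $\m$ with its dual. Nothing is missing; the one step you rightly single out, the nondegeneracy of $\kappa$ on $\m=\h^\perp$, is handled correctly.
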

Note that Theorem \ref{th:estructura} guarantees the simplicity of
standard enveloping Lie algebras of the non-adjoint irreducible
LY-algebras. In the adjoint type, according to Theorem
\ref{th:adjunto} below, the standard enveloping Lie algebras are
never simple. So these results split the classification of
irreducible LY-algebras into the following non overlapping types:

\begin{equation}\label{tipos}
\begin{array}{ll}

\textsc{Adjoint Type:}&\textrm{$\m$ is the adjoint module for $D(\m,\m)$}\\

\textsc{Non-Simple Type:} &\textrm{$D(\m,\m)$ is not simple}\\

\textsc{Generic Type:}&\textrm{Both $\g(\m)$ and $D(\m,\m)$ are simple}
\end{array}
\end{equation}

Moreover, the complete classification of the first type is easily
obtained as we shall show in the sequel. The non-simple type will be
studied in Section 4, while the generic type will be the object of a
forthcoming paper \cite{forthcoming}.

Given any irreducible LY-algebra of adjoint type $(\m, x\cdot y,
[x,y,z])$, the inner derivation Lie algebra $D(\m,\m)$ is simple.
Thus from \cite{BenOs} the subspace
\begin{equation}\label{eq:benos}
\Hom_{D(\m,\m)}(\Lambda^2\m,\m)
\end{equation}
is one dimensional and spanned by the Lie bracket in
$D(\m,\m)$. So, given a $D(\m,\m)$-module isomorphism $\varphi:
D(\m,\m) \to \m$, the maps
\begin{equation}\label{eq:circ1}
\cdot:\m \times \m \to \m,\ (x,y) \mapsto x\cdot y
\end{equation}
and
\begin{equation}\label{eq:triple1}
\tilde D:\m \times \m \to \m,\ (x,y) \mapsto
\varphi(D(x,y))=\varphi([x,y,-])
\end{equation}
belong to the vector space in (\ref{eq:benos}), and hence there
exist scalars $\alpha, \beta \in k$, $\beta \ne 0$, such that
\begin{equation}\label{eq:circ2}
\varphi (x) \cdot \varphi (y)= \alpha \varphi ([x,y])
\end{equation}
\begin{equation}\label{eq:triple2}
 \tilde D(\varphi (x),\varphi (y))=\beta \varphi([x,y])
\end{equation}
for any $x,y \in D(\m,\m)$. Moreover, there is then an isomorphism
of Lie algebras:
\begin{equation}\label{eq:g(m)}
\g(\m)=D(\m,\m)\oplus\varphi(D(\m,\m))\cong K \otimes D(\m,\m),
\end{equation}
 where $K$ is the quotient
$k[t]/(t^2-\alpha t -\beta)$ of the polynomial ring on the variable
$t$, that maps $x+\varphi(y)$ to $1\otimes x + \bar t\otimes y$, for
any $x,y\in D(\m,\m)$, where $\bar t$ denotes the class of the
variable $t$ modulo the ideal $(t^2-\alpha t-\beta)$. Now, depending
on $\alpha$, two different situations appear:
\begin{itemize}
\item
 If $\alpha =0$, it can be assumed that $\beta=1$
(by taking $\frac{1}{\sqrt \beta} \varphi$ instead of $\varphi$). In
this case, $\m$ is a LY-algebra with trivial binary product, so a
Lie triple system, isomorphic to the triple system given by the Lie
algebra $D(\m,\m)$ with trivial binary product and ternary product
given by $[x,y,z]=[[x,y],z]$. In this case, $\g(\m)$ is the direct
sum of two copies of $D(\m,\m)$.
\item
 If $\alpha \ne 0$, it can be assumed that $\alpha =1$
(by taking $\frac{1}{\alpha} \varphi$ instead of $\varphi$). Then
$\m$ is isomorphic to the the LY-algebra $D(\m,\m)$ with binary and
ternary products given by $x \cdot y=[x,y]$ and
$[x,y,z]:=\beta[[x,y],z]$. Moreover, if $\beta \ne -1/4$
(equivalently, $K \cong k \times k$), $\g(\m)$ is the direct sum of
two copies of $D(\m,\m)$. In case $\beta = -1/4$, the enveloping Lie
algebra $\g(\m)$ is isomorphic to the Lie algebra $k[t]/(t^2)\otimes
D(\m,\m)$, whose solvable (actually abelian) radical is $(t)/(t^
2)\otimes D(\m,\m)$.
\end{itemize}

Now, from our previous discussion we obtain:

\begin{theorem}\label{th:adjunto}
Up to isomorphism, the LY-algebras of adjoint type are the simple
Lie algebras $L$ with binary and ternary products  of one of the
following types:
\begin{enumerate}
 \item [(i)] $x\cdot y=0$ and $[x,y,z]=[[x,y],z]$
\item [(ii)] $x \cdot y=[x,y]$ and $[x,y,z]=\beta[[x,y],z]$, $\beta \ne 0$
\end{enumerate}
where $[x,y]$ is the Lie bracket in $L$. Moreover, the standard
enveloping Lie algebra is a direct sum of two copies of the simple
Lie algebra $L$ in case \textup{(i)} or case \textup{(ii)} with $\beta \ne  -1/4$. In
case \textup{(ii)} with $\beta = -1/4$, the standard enveloping Lie algebra is
isomorphic to $k[t]/(t^ 2)\otimes L$. \hfill $\square$
\end{theorem}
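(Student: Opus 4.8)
The plan is to read Theorem \ref{th:adjunto} as the precise repackaging of the computation carried out in the paragraph preceding it, so that the proof reduces to assembling the two normalized cases together with the analysis of the two-dimensional algebra governing the enveloping algebra. Write $\s=D(\m,\m)$. By Theorem \ref{th:estructura} it is semisimple, and since (being of adjoint type) it acts irreducibly on itself through the adjoint representation, it must in fact be simple. Fixing a module isomorphism $\varphi\colon\s\to\m$, I would transport the binary product and the map $\tilde D(x,y)=\varphi(D(x,y))$ back to $\s$. Both are skew-symmetric --- the binary product by (LY1), and $D(x,y)$ because $D(x,x)(y)=[x,x,y]=0$ by (LY2) --- and both are $\s$-equivariant: the binary product because every inner derivation is a derivation of it (this is (LY5)), and $\tilde D$ because of the bracket identity in the first line of \eqref{eq:gm}, which is (LY6). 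Hence both lie in $\Hom_\s(\Lambda^2\s,\s)$, which by \cite{BenOs} is one-dimensional and spanned by the Lie bracket. This yields scalars $\alpha,\beta$ satisfying \eqref{eq:circ2} and \eqref{eq:triple2}, with $\beta\ne 0$ since $\tilde D=0$ would force $D(\m,\m)=0$.

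Next I would normalize $\varphi$. If $\alpha=0$, replacing $\varphi$ by $\frac{1}{\sqrt\beta}\varphi$ arranges $\beta=1$ and gives, after identifying $\m$ with $\s$, the products $x\cdot y=0$ and $[x,y,z]=\bigl(\ad[x,y]\bigr)(z)=[[x,y],z]$, which is case (i). If $\alpha\ne 0$, replacing $\varphi$ by $\frac{1}{\alpha}\varphi$ arranges $\alpha=1$ and gives $x\cdot y=[x,y]$ and $[x,y,z]=\beta[[x,y],z]$, which is case (ii). In both cases the resulting copy of $\s$ with these products is the asserted model $L$, and the isomorphism $\m\cong L$ of LY-algebras is immediate from the transport. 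For the converse inclusion --- that these $L$ really are irreducible LY-algebras of adjoint type --- I would verify (LY1)--(LY6) directly from the Jacobi identity in $L$ (or exhibit them via \eqref{eq:binter} inside the enveloping algebra below), and observe that $D(x,y)=\ad[x,y]$ up to the scalar, so $D(L,L)=\ad[L,L]=\ad L\cong L$; thus $\m$ is the adjoint module and, $L$ being simple, irreducible.

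For the enveloping algebra I would check that $x+\varphi(y)\mapsto 1\otimes x+\bar t\otimes y$ is a Lie isomorphism $\g(\m)\xrightarrow{\sim} K\otimes\s$ with $K=k[t]/(t^2-\alpha t-\beta)$, as in \eqref{eq:g(m)}: the only nontrivial bracket is $[\varphi(x),\varphi(y)]=D(\varphi(x),\varphi(y))+\varphi(x)\cdot\varphi(y)$, whose $\s$-component and $\m$-component correspond respectively to the constant part $\beta$ and the $\alpha\bar t$ part of the relation $\bar t^2=\alpha\bar t+\beta$ in $K$. Since $k$ is algebraically closed, $K$ is either $k\times k$ (when the discriminant $\alpha^2+4\beta\ne 0$) or $k[t]/(t^2)$ (when $\alpha^2+4\beta=0$), giving $\g(\m)\cong\s\oplus\s$ or $\g(\m)\cong k[t]/(t^2)\otimes\s$. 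Evaluating the discriminant on the normalized parameters completes the proof: in case (i) it is $4\ne 0$, and in case (ii) it is $1+4\beta$, which vanishes precisely when $\beta=-1/4$. The step demanding genuine care is not any estimate but the pair of equivariance verifications that make \cite{BenOs} applicable, together with the bracket check for the enveloping-algebra isomorphism; the remainder is normalization and the elementary classification of the two-dimensional algebra $K$.
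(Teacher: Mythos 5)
Your proposal is correct and follows essentially the same route as the paper, whose proof of Theorem \ref{th:adjunto} is precisely the discussion preceding it: simplicity of $D(\m,\m)$, the one-dimensionality of $\Hom_{D(\m,\m)}(\Lambda^2\m,\m)$ from \cite{BenOs}, the scalars $\alpha,\beta$ with the two normalizations, and the identification $\g(\m)\cong K\otimes D(\m,\m)$ with $K=k[t]/(t^2-\alpha t-\beta)$ analyzed via its discriminant. Your added verifications (equivariance, $\beta\ne 0$, the converse direction) only make explicit what the paper leaves implicit.
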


\begin{remark}\label{re:adjoint}
This Theorem, together with Theorem \ref{th:estructura}, shows that
the adjoint type in \eqref{tipos} does not overlap with the other
two types, as the standard enveloping Lie algebra is never simple
for the adjoint type, while it is always simple in the non-simple
and generic types. \hfill $\square$
\end{remark}


\section{Examples of non-simple type irreducible
LY-algebras}\label{Section:Examples}

Several examples  of irreducible LY-algebras and of its enveloping
Lie algebras will be shown in this section. In the next section,
these examples will be proved to exhaust all the possibilities for
non-simple type irreducible LY-algebras.

\subsection{Classical examples}

Given a vector space $V$ and a nondegenerate $\epsilon$-symmetric
bilinear form $\varphi$ on $V$ (that is, $\varphi$ is symmetric if
$\epsilon=1$ and skew-symmetric if $\epsilon=-1$), consider the Lie
algebra $\anti(V,\varphi)=\{ f\in \gl(V):
\varphi(f(v),w)=-\varphi(v,f(w))\ \forall v,w\in V\}$ of skew
symmetric linear maps relative to $\varphi$. Thus,
$\anti(V,\varphi)=\frso(V,\varphi)$ (respectively
$\frsp(V,\varphi)$) if $\varphi$ is symmetric (respectively
skew-symmetric). This Lie algebra $\anti(V,\varphi)$ is spanned by
the linear maps $\varphi_{v,w}=\varphi(v,.)w-\epsilon\varphi(w,.)v$,
for $v,w\in V$. The bracket of two such linear maps is given by:
\begin{equation}\label{eq:bracketvarphis}
\begin{split}
[\varphi_{a,b},\varphi_{x,y}]&=
  \varphi_{\varphi_{a,b}(x),y}+\varphi_{x,\varphi_{a,b}(y)}\\
  &=\varphi(a,x)\varphi_{b,y}-\varphi(x,b)\varphi_{a,y}
   -\varphi(y,a)\varphi_{b,x}+\varphi(b,y)\varphi_{a,x},
\end{split}
\end{equation}
for any $a,b,x,y\in V$.

Moreover, the subspace $\sym(V,\varphi)=\{ f\in \End_k(V):
\varphi(f(v),w)=\varphi(v,f(w))\ \forall v,w\in V\}$ of the
symmetric linear maps relative to $\varphi$ is closed under the
symmetrized product:
\[
f\bullet g=\frac{1}{2}(fg+gf).
\]
($\sym(V,\varphi)$ is a special Jordan algebra.) Use will be made of
the subspace of trace zero symmetric linear maps, which will be
denoted by $\sym_0(V,\varphi)$. It is clear that
$\sym(V,\varphi)=k1_V\oplus \sym_0(V,\varphi)$, where $1_V$ denotes
the identity map on $V$.

\medskip

\begin{examples}\label{ex:skewV1plusV2}
Let $(V_i,\varphi_i)$, $i=1,2$, be two vector spaces endowed with
nondegenerate $\epsilon$-symmetric bilinear forms ($\epsilon=\pm
1$), with $1\leq \dim V_1\leq \dim V_2$. Consider the direct sum
$V_1\oplus V_2$ with the nondegenerate $\epsilon$-symmetric bilinear
form given by the orthogonal sum $\varphi=\varphi_1\perp\varphi_2$.
Then, under the natural identifications,
\[
\begin{split}
\anti(V_1\oplus V_2,\varphi)
 &=\bigl(\varphi_{V_1,V_1}\oplus\varphi_{V_2,V_2}\bigr)
   \oplus \varphi_{V_1,V_2}\\
 &=\bigl(\anti(V_1,\varphi_1)\oplus\anti(V_2,\varphi_2)\bigr)
   \oplus \varphi_{V_1,V_2}.
\end{split}
\]
This gives a $\Z_2$-grading of $\anti(V_1\oplus V_2,\varphi)$. As a
module for the even part
$\anti(V_1,\varphi_1)\oplus\anti(V_2,\varphi_2)$, the odd part
$\varphi_{V_1,V_2}$ is isomorphic to $V_1\otimes V_2$, and it is
irreducible unless $\epsilon=1$ and either $\dim V_1=1$ and $1\leq\dim V_2\leq 2$, or $\dim V_1=2$. The Lie bracket of two basic elements in
$\varphi_{V_1,V_2}$ is, due to \eqref{eq:bracketvarphis} and since
$V_1$ and $V_2$ are orthogonal, given by:
\[
[\varphi_{x_1,x_2},\varphi_{y_1,y_2}]
 =\varphi_2(x_2,y_2)(\varphi_1)_{x_1,y_1}+\varphi_1(x_1,y_1)(\varphi_2)_{x_2,y_2},
\]
for any $x_1,y_1\in V_1$ and $x_2,y_2\in V_2$.

Therefore, unless $\epsilon=1$ and either $\dim V_1=1$ and $1\leq\dim V_2\leq 2$, or
$\dim V_1=2$, $\m=V_1\otimes V_2$ is an irreducible LY-algebra
(actually an irreducible Lie triple system) with trivial binary
product, and ternary product given by (see \eqref{eq:binter}):
\begin{equation}\label{eq:terV1otimesV2}
\begin{split}
[x_1\otimes x_2, y_1\otimes y_2,z_1\otimes z_2]
 &=\varphi_2(x_2,y_2)\Bigl((\varphi_1)_{x_1,y_1}(z_1)\otimes
 z_2\Bigr)\\
  &\qquad\qquad +\varphi_1(x_1,y_1)\Bigl(z_1\otimes
  (\varphi_2)_{x_2,y_2}(z_2)\Bigr).
\end{split}
\end{equation}
\end{examples}

\begin{examples}\label{ex:skewV1otimesV2}
Let $(V_i,\varphi_i)$ be a vector space endowed with a nondegenerate
$\epsilon_i$-symmetric bilinear form ($i=1,2$), with $2\leq \dim
V_1\leq \dim V_2$. Then $V_1\otimes V_2$ is endowed with the
nondegenerate $\epsilon_1\epsilon_2$-symmetric bilinear form
$\varphi=\varphi_1\otimes \varphi_2$. For $i=1,2$, we have:
\[
\gl(V_i)=\anti(V_i,\varphi_i)\oplus\sym(V_i,\varphi_i)=
 \anti(V_i,\varphi_i)\oplus\sym_0(V_i,\varphi_i)\oplus k1_{V_i},
\]
and
\[
\begin{split}
\anti(&V_1\otimes  V_2,\varphi)\\
&=
 \Bigl(\anti(V_1,\varphi_1)\otimes  k1_{V_2}\, \oplus\,
 k1_{V_1}\otimes \anti(V_2,\varphi_2)\Bigr)\oplus\\
 &\qquad\Bigl(\bigl(\anti(V_1,\varphi_1)\otimes
 \sym_0(V_2,\varphi_2)\bigr)
 \oplus \bigl(\sym_0(V_1,\varphi_1)\otimes \anti(V_2,\varphi_2)\bigr)\Bigr).
\end{split}
\]
This provides a reductive decomposition $\g=\h\oplus\m$ of
$\g=\anti(V_1\otimes  V_2,\varphi)$, where $\h\simeq
\anti(V_1,\varphi_1)\oplus\anti(V_2,\varphi_2)$ and
$\m=\bigl(\anti(V_1,\varphi_1)\otimes \sym_0(V_2,\varphi_2)\bigr)
 \oplus \bigl(\sym_0(V_1,\varphi_1)\otimes \anti(V_2,\varphi_2)\bigr)$.

In this situation, if $\m$ is an irreducible module for $\h$, then
$\dim V_1=2$ and $\epsilon_1=-1$ (which forces
$\sym_0(V_1,\varphi_1)$ to be trivial).

Assuming $\dim V_1=2$, $\epsilon_1=-1$, and $\dim V_2=n\geq 2$, then
$\m=\frsp(V_1,\varphi_1)\otimes \sym_0(V_2,\varphi_2)$ is an
irreducible module for $\h$ if and only if either $\epsilon_2=-1$
and $\dim V_2= 2m\geq 4$, or $\epsilon_2=1$ and $\dim V_2\geq 3$.

With these assumptions, for $a,b\in \frsp(V_1,\varphi_1)$ and
$f,g\in \sym_0(V_2,\varphi_2)$, $ab+ba=\tr(ab)1_{V_1}$ (as
$\frsp(V_1,\varphi_1)$ is isomorphic to the Lie algebra
$\frsl_2(k)$), and hence
$ab=\frac{1}{2}\bigl([a,b]+\tr(ab)1_{V_1}\bigr)$ and $ba=
\frac{1}{2}\bigl(-[a,b]+\tr(ab)1_{V_1}\bigr)$ hold. Moreover, if the
dimension of $V_2$ is $n$, then for any $f,g\in
\sym_0(V_2,\varphi_2)$, the element
$fg+gf-\frac{2}{n}\tr(fg)1_{V_2}$ also belongs to
$\sym_0(V_2,\varphi_2)$.

Now, for any $a,b\in \frsp(V_1,\varphi_1)$ and $f,g\in
\sym_0(V_2,\varphi_2)$:
\begin{equation}\label{eq:skewV1otimesV2}
\begin{split}
[a\otimes f,b\otimes g]&=
   ab\otimes fg -ba\otimes gf\\
   &=\frac{1}{2}[a,b]\otimes(fg+gf) +
   \frac{1}{2}\tr(ab)1_{V_1}\otimes [f,g]\\
   &=\Bigl([a,b]\otimes \frac{1}{n}\tr(fg)1_{V_2}
   +\frac{1}{2}\tr(ab)1_{V_1}\otimes [f,g]\Bigr) \\
   &\qquad\qquad +
   \frac{1}{2}[a,b]\otimes\bigl(fg+gf-\frac{2}{n}\tr(fg)1_{V_2}\bigr).
\end{split}
\end{equation}
Therefore, the binary and ternary products in the irreducible
LY-algebra $\m=\anti(V_1,\varphi_1)\otimes \sym_0(V_2,\varphi_2)$
are given by:
\begin{equation}\label{eq:binterskewV1otimesV2}
\begin{split}
(a\otimes f)\cdot(b\otimes
g)&=\frac{1}{2}[a,b]\otimes\bigl(fg+gf-\frac{2}{n}\tr(fg)1_{V_2}\bigr),\\[4pt]
[a\otimes f,b\otimes g,c\otimes h]&=
  \frac{1}{n}\tr(fg)[[a,b],c]\otimes h  +
  \frac{1}{2}\tr(ab)c\otimes [[f,g],h],
\end{split}
\end{equation}
for any $a,b,c\in \anti(V_1,\varphi_1)=\frsl(V_1)$ and $f,g,h\in
\sym_0(V_2,\varphi_2)$.

Note that for $\epsilon_2=-1$ and $\dim V_2=4$, it is easily checked that $[[a,b],c]=2\tr(bc)a-2\tr(ac)b$ for any $a,b,c\in\frsl(V_1)$, while $fg+gf-\frac{1}{2}\tr(fg)1_{V_2}=0$ and $[[f,g],h]=\tr(gh)f-\tr(fh)g$ for any $f,g,h\in\sym_0(V_2,\varphi_2)$. Hence \eqref{eq:binterskewV1otimesV2} becomes in this case
\[
\begin{split}
(a\otimes f)\cdot(b\otimes
g)&=0,\\[4pt]
[a\otimes f,b\otimes g,c\otimes h]&=
  \frac{1}{2}\tr(fg)\bigl(\tr(bc)a-\tr(ac)b\bigr)\otimes h \\
  &\qquad\qquad +
  \frac{1}{2}\tr(ab)c\otimes \bigl(\tr(gh)f-\tr(fh)g\bigr),
\end{split}
\]
for any $a,b,c\in \anti(V_1,\varphi_1)=\frsl(V_1)$ and $f,g,h\in
\sym_0(V_2,\varphi_2)$, and thus the triple product coincides with the expression in \eqref{eq:terV1otimesV2} for $\varphi_1(a,b)=\tr(ab)$ and $\varphi_2(f,g)=-\frac{1}{2}\tr(fg)$. Therefore, the irreducible Lie-Yamaguti algebras obtained here for $\dim V_1=2$, $\dim V_2=4$ and $\epsilon_1=-1=\epsilon_2$ coincides with the one obtained in Example \ref{ex:skewV1plusV2} for two vector spaces of dimension $3$ and $5$.
\hfill\qed

\end{examples}

\bigskip

\begin{examples}\label{ex:slV1otimesslV2}
Let now $V_1$ and $V_2$ be two vector spaces with $2\leq \dim
V_1\leq\dim V_2$. The algebra of endomorphisms of the tensor product
$V_1\otimes V_2$ can be identified with the tensor product of the
algebras of endomorphisms of $V_1$ and $V_2$. Moreover, the general
Lie algebra $\gl(V_i)$ decomposes as $\gl(V_i)=k1_{V_i}\oplus
\frsl(V_i)$. Then
\[
\begin{split}
\frsl(V_1\otimes V_2)&=\bigl(\frsl(V_1)\otimes k1_{V_2}\bigr)\oplus
\bigl(k1_{V_1}\otimes \frsl(V_2)\bigr)\oplus \bigl(\frsl(V_1)\otimes
\frsl(V_2)\bigr)\\
 &\simeq \bigl(\frsl(V_1)\oplus\frsl(V_2)\bigr)\oplus
 \bigl(\frsl(V_1)\otimes \frsl(V_2)\bigr)
\end{split}
\]
gives a reductive decomposition, and this shows that
$\m=\frsl(V_1)\otimes \frsl(V_2)$ is an irreducible LY-algebra. For
$a,b\in \frsl(V_1)$, both $[a,b]=ab-ba$ and
$ab+ba-\frac{2}{n_1}\tr(ab)1_{V_1}$ belong to $\frsl(V_1)$, where
$n_i$ denotes the dimension of $V_i$, $i=1,2$. Therefore, for any
$a,b\in \frsl(V_1)$ and $f,g\in \frsl(V_2)$:
\begin{equation}\label{eq:slV1slV2}
\begin{split}
[a\otimes f,b\otimes g]&=ab\otimes fg-ba\otimes gf\\
 &=\Bigl([a,b]\otimes\frac{1}{n_2}\tr(fg)1_{V_2} +
  \frac{1}{n_1}\tr(ab)1_{V_1}\otimes [f,g]\Bigr) \\
  &\qquad\quad +\Bigl(\frac{1}{2}[a,b]\otimes
  (fg+gf-\frac{2}{n_2}\tr(fg)1_{V_2})\\
  &\qquad\qquad\quad +
  (ab+ba-\frac{2}{n_1}\tr(ab)1_{V_1})\otimes\frac{1}{2}[f,g]\Bigr).
\end{split}
\end{equation}
Hence, the binary and the ternary products in the irreducible
LY-algebra $\m=\frsl(V_1)\otimes\frsl(V_2)$ are given by:
\begin{equation}\label{eq:binterslV1slV2}
\begin{split}
(a\otimes f)\cdot(b\otimes g)&=\frac{1}{2}[a,b]\otimes
(fg+gf-\frac{2}{n_2}\tr(fg)1_{V_2}) \\
&\qquad +
(ab+ba-\frac{2}{n_1}\tr(ab)1_{V_1})\otimes\frac{1}{2}[f,g],\\[8pt]
[a\otimes f,b\otimes g,c\otimes h]&=
  [[a,b],c]\otimes\frac{1}{n_2}\tr(fg)h+\frac{1}{n_1}\tr(ab)c\otimes
  [[f,g],h],
\end{split}
\end{equation}
for any $a,b,c\in \frsl(V_1)$ and $f,g,h\in \frsl(V_2)$.

Note that, as noted in Example \ref{ex:skewV1otimesV2}, if $\dim V_1=2$, then for any $a,b,c\in \frsl(V_1)$, $ab+ba-\tr(ab)1_{V_1}=0$, while $[[a,b],c]=2\tr(bc)a-2\tr(ac)b$. Hence, if $\dim V_1=\dim V_2=2$, \eqref{eq:binterslV1slV2} becomes:
\[
\begin{split}
(a\otimes f)\cdot(b\otimes g)&=0,\\[4pt]
[a\otimes f,b\otimes g,c\otimes h]&=\tr(fg)\bigl(\tr(bc)a-\tr(ac)b\bigr)\otimes h \\
  &\qquad\qquad +
  \tr(ab)c\otimes \bigl(\tr(gh)f-\tr(fh)g\bigr),
\end{split}
\]
for any $a,b,c\in \frsl(V_1)$ and $f,g,h\in \frsl(V_2)$, and thus the triple product coincides with the expression in \eqref{eq:terV1otimesV2} for $\varphi_1(a,b)=\tr(ab)$ and $\varphi_2(f,g)=-\tr(fg)$. Therefore, the irreducible Lie-Yamaguti algebras obtained here for $\dim V_1=2=\dim V_2$  coincides with the one obtained in Example \ref{ex:skewV1plusV2} for two vector spaces of dimension $3$. \hfill\qed

\end{examples}

\bigskip

\subsection{Generalized Tits Construction}

Examples \ref{ex:skewV1plusV2} and \ref{ex:skewV1otimesV2} can be
seen as instances of a Generalized Tits Construction, due to Benkart
and Zelmanov \cite{BenZel}, which will now be reviewed in a way
suitable for our purposes.

\smallskip

Let $X$ be a unital $k$-algebra endowed with a \emph{normalized}
trace $t:X\rightarrow k$. This means that $t$ is a linear map with
$t(1)=1$, $t(xy)=t(yx)$ and $t((xy)z)=t(x(yz))$ for any $x,y,z\in
X$. Then $X=k1\oplus X_0$, where $X_0=\{x\in X: t(x)=0\}$ is the set
of trace zero elements in $X$. For $x,y\in X_0$, the element
$x*y=xy-t(xy)1$ lies in $X_0$ too, and this defines a bilinear
multiplication on $X_0$. Assume there is a skew-symmetric bilinear
transformation $D:X_0\times X_0\rightarrow \Der(X)$, where $\Der(X)$
denotes the Lie algebra of derivations of $X$, such that $D_{x,y}$
leaves invariant $X_0$ and $[E,D_{x,y}]=D_{E(x),y}+D_{x,E(y)}$, for
any $x,y\in X_0$ and $E\in D_{X_0,X_0}$. Here $D_{X_0,X_0}$ denotes the
Lie subalgebra of $\Der(X)$ spanned by the image of the map $D$.

An easy example of this situation is given by the Jordan algebras of
symmetric bilinear forms: let $V$ be a vector space endowed with a
symmetric bilinear form $\varphi$, then $\cJ(V,\varphi)=k1\oplus V$,
with commutative multiplication given by
\[
(\alpha 1+v)(\beta 1+w)=\bigl(\alpha\beta +\varphi(v,w)\bigr)1
+\bigl(\alpha w+\beta v\bigr),
\]
for any $\alpha,\beta\in k$ and $v,w\in V$. Here the normalized
trace is given by $t(1)=1$ and $t(v)=0$ for any $v\in V$, while the
skew symmetric map $D$ is given by $D(v,w)=\varphi_{v,w}$ for any
$v,w\in V$.

Let $Y=k1\oplus Y_0$ be another such algebra, with normalized trace
also denoted by $t$, multiplication on $Y_0$ denoted by $\star$ and
analogous skew-symmetric bilinear map $d:Y_0\times Y_0\rightarrow
\Der(Y)$. Then the vector space
\begin{equation}\label{eq:TXY}
\cT(X,Y)=D_{X_0,X_0}\oplus \bigl(X_0\otimes Y_0\bigr)\oplus
d_{Y_0,Y_0}
\end{equation}
is an anticommutative algebra with multiplication defined by
\begin{equation}\label{eq:bracketonTXY}
\begin{split}
&\text{$D_{X_0,X_0}$ and $d_{Y_0,Y_0}$ are subalgebras of
$\cT(X,Y)$},\\
&[D_{X_0,X_0},d_{Y_0,Y_0}]=0,\\
&[D,x\otimes y]=D(x)\otimes y,\\
&[d,x\otimes y]=x\otimes d(y),\\
&[x\otimes y,x'\otimes y']=t(yy')D_{x,x'} + (x*x')\otimes (y\star
y')+ t(xx')d_{y,y'},
\end{split}
\end{equation}
for any $x,x'\in X_0$, $y,y'\in Y_0$, $D\in D_{X_0,X_0}$ and $d\in
d_{Y_0,Y_0}$.

\begin{proposition}\label{pr:TXYLie} \textup{(\cite[Proposition
3.9]{BenZel})}
The algebra $\cT(X,Y)$ above is a Lie algebra provided the following
relations hold
\begin{equation*}
\begin{split}
\textup{(i)\ }&\  \displaystyle{\sum_{\circlearrowleft}
 t\bigl((x_{1}*x_{2}) x_{3}\bigr)\,
d_{y_1 \star y_2, y_3}}=0,\\[6pt]
\textup{(ii)\ }&\  \displaystyle{\sum_{\circlearrowleft}
 t\bigl( (y_1 \star y_2) y_{3}\bigr)
\,D_{x_1* x_2,x_3}}=0,\\[6pt]
\textup{(iii)\ }&\ \displaystyle{\sum_{\circlearrowleft}
 \Bigl(D_{x_1,x_2}(x_3) \otimes t\bigl(y_1 y_2\bigr) y_3}
   + (x_1*x_2)*x_3 \otimes (y_1 \star y_2)\star y_3\\[-6pt]
  &\qquad\qquad\qquad\qquad +
t(x_1  x_2) x_3\otimes d_{y_1, y_2}(y_3)\Bigr)=0
\end{split}
\end{equation*}
for any $x_1,x_2,x_{3} \in X_0$ and any $y_1,y_2,y_3 \in Y_0$. The
notation ``$\displaystyle{\sum_\circlearrowleft}$'' indicates
summation over the cyclic permutation of the indices.
\end{proposition}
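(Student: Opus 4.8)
The plan is to prove the Jacobi identity for the anticommutative algebra $\cT(X,Y)$, since anticommutativity is already built into the definition \eqref{eq:bracketonTXY}. By trilinearity of the Jacobian $J(a,b,c)=[[a,b],c]+[[b,c],a]+[[c,a],b]$, it suffices to check $J=0$ when each of $a,b,c$ is homogeneous, i.e.\ lies in one of the three summands $D_{X_0,X_0}$, $X_0\otimes Y_0$ or $d_{Y_0,Y_0}$. I would organize the verification according to how many of the three arguments lie in the middle summand $X_0\otimes Y_0$, recording first two structural facts that the defining data supply: every $E\in D_{X_0,X_0}$ is a derivation of $X$ leaving $X_0$ invariant and killing the identity, so it satisfies $t\circ E=0$ on $X$ and acts as a derivation of the product $*$ on $X_0$ (and symmetrically for $d_{Y_0,Y_0}$ acting on $(Y_0,\star)$); and the anticommutativity of $\cT(X,Y)$ forces the consistency relation $(x*x')\otimes(y\star y')=-(x'*x)\otimes(y'\star y)$ for all $x,x'\in X_0$ and $y,y'\in Y_0$.

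When at most one argument lies in $X_0\otimes Y_0$ the identity is immediate from the axioms in \eqref{eq:bracketonTXY}: the cases with all arguments in $D_{X_0,X_0}$ or all in $d_{Y_0,Y_0}$ reduce to the Jacobi identity in those Lie subalgebras of $\Der(X)$ and $\Der(Y)$; the mixed cases among $D_{X_0,X_0}$ and $d_{Y_0,Y_0}$ collapse using $[D_{X_0,X_0},d_{Y_0,Y_0}]=0$ together with the subalgebra property; and the cases with exactly one tensor argument reduce, after using the module actions $[E,x\otimes y]=E(x)\otimes y$ and $[d,x\otimes y]=x\otimes d(y)$ and the commutator definitions of the brackets in the two subalgebras, to $0$. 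Next I would treat the cases with exactly two tensor arguments $x\otimes y$, $x'\otimes y'$ and one element $E\in D_{X_0,X_0}$ (the case $E\in d_{Y_0,Y_0}$ being symmetric). Expanding $J$ and sorting its output into the three summands, the $D_{X_0,X_0}$-component cancels by the skew-symmetry of $D$ and the compatibility $[E,D_{x,x'}]=D_{E(x),x'}+D_{x,E(x')}$; the $d_{Y_0,Y_0}$-component cancels because its coefficient is $t\bigl(E(xx')\bigr)=0$; and the $X_0\otimes Y_0$-component cancels upon applying the consistency relation above with $x'$ replaced by $E(x')$, together with the derivation property $E(x*x')=E(x)*x'+x*E(x')$.

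The decisive case is the one in which all three arguments lie in $X_0\otimes Y_0$, and here is where hypotheses (i)--(iii) enter. Writing $m_i=x_i\otimes y_i$ and expanding $\sum_{\circlearrowleft}[[m_1,m_2],m_3]$ by repeated use of \eqref{eq:bracketonTXY}, I would sort the resulting cyclic sum into its three homogeneous components. The $D_{X_0,X_0}$-component is exactly $\sum_{\circlearrowleft}t\bigl((y_1\star y_2)y_3\bigr)D_{x_1*x_2,x_3}$, the $d_{Y_0,Y_0}$-component is exactly $\sum_{\circlearrowleft}t\bigl((x_1*x_2)x_3\bigr)d_{y_1\star y_2,y_3}$, and the $X_0\otimes Y_0$-component is exactly the cyclic sum displayed in (iii). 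Thus the Jacobian vanishes on $X_0\otimes Y_0$ if and only if relations (ii), (i) and (iii) hold, respectively, which completes the proof.

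I expect the only genuine obstacle to be the bookkeeping in the two intermediate (two-tensor) cases, where a single bracket of tensor elements already spreads across all three summands and the cancellations rely simultaneously on the skew-symmetry of $D$ and $d$, on the compatibility relation, on $t\circ E=0$, and on the anticommutativity-induced consistency of the tensor products. The all-tensor case, although the heaviest computationally, is conceptually transparent once the output is separated by components, since each component then matches one of (i)--(iii) verbatim.
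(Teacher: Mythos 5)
The paper does not actually prove this proposition; it is quoted verbatim from Benkart--Zelmanov (\cite[Proposition 3.9]{BenZel}), so there is no in-paper argument to compare against. Your direct verification is nevertheless correct and is the standard one: the case split by the number of arguments in $X_0\otimes Y_0$ is the right organization, the two-tensor case does close exactly as you describe (the $D$-component via skew-symmetry of $D$ and the compatibility $[E,D_{x,x'}]=D_{E(x),x'}+D_{x,E(x')}$; the $d$-component via $t\circ E=0$, which follows from $E(1)=0$ and $E(X_0)\subseteq X_0$; the tensor component via the derivation property of $E$ on $(X_0,*)$ together with your consistency relation), and in the all-tensor case the three homogeneous components of the cyclic Jacobian are precisely (ii), (i) and (iii). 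One point worth making explicit: the relation $(x*x')\otimes(y\star y')=-(x'*x)\otimes(y'\star y)$ is not a consequence of the displayed formulas alone but an additional constraint on the pair $((X_0,*),(Y_0,\star))$, implicitly imposed when the paper declares $\cT(X,Y)$ to be anticommutative; you correctly flag it as a hypothesis you are entitled to use, and it is satisfied in all the instances the paper invokes (one of $*,\star$ commutative and the other anticommutative, or one of the products identically zero).
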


Note that, in case $\cT(X,Y)$ is a Lie algebra, then $X_0\otimes
Y_0$ becomes a LY-algebra with binary and ternary products given by
\begin{equation}\label{eq:binterX0otimesY0}
\begin{split}
(x_1\otimes y_1)\cdot (x_2\otimes y_2)&=(x_1*x_2)\otimes (y_1\star
y_2),\\
[x_1\otimes y_1,x_2\otimes y_2,x_3\otimes
y_3]&=D_{x_1,x_2}(x_3)\otimes t(y_1y_2)y_3\\
&\qquad\qquad +t(x_1x_2)x_3\otimes d_{y_1,y_2}(y_3),
\end{split}
\end{equation}
for any $x_1,x_2,x_3\in X_0$ and $y_1,y_2,y_3\in Y_0$. This will be
called the \emph{Lie-Yamaguti algebra inside $\cT(X,Y)$}.

\medskip

\begin{remark}\label{re:TJVJW}
An important example where $\cT(X,Y)$ is a Lie algebra arises when
Jordan algebras of symmetric bilinear forms are used as the
ingredients \cite[3.28]{BenZel}. If $(V_1,\varphi_1)$ and
$(V_2,\varphi_2)$ are two vector spaces endowed with nondegenerate
symmetric bilinear forms and $\cJ_1=\cJ(V_1,\varphi_1)$ and
$\cJ_2=\cJ(V_2,\varphi_2)$ are the corresponding Jordan algebras,
then
$D_{(\cJ_i)_0,(\cJ_i)_0}=\frso(V_i,\varphi_i)=\anti(V_i,\varphi_i)$,
$i=1,2$, and the reductive decomposition
\[
\begin{split}
\cT(\cJ_1,\cJ_2)&=\Bigl(D_{(\cJ_1)_0,(\cJ_1)_0}\oplus
D_{(\cJ_2)_0,(\cJ_2)_0}\Bigr)\oplus \bigl((\cJ_1)_0\otimes (\cJ_2)_0\bigr)\\
 &\simeq \bigl(\frso(V_1,\varphi_1)\oplus\frso(V_2,\varphi_2)\Bigr)
  \oplus \bigl(V_1\otimes V_2\bigr)
\end{split}
\]
coincides, with the natural identifications, with the reductive
decomposition in Example \ref{ex:skewV1plusV2} with $\epsilon=1$.
Therefore, the LY-algebras in Example \ref{ex:skewV1plusV2} with
$\epsilon=1$, are the LY-algebras obtained inside the Generalized
Tits Construction $\cT(\cJ_1,\cJ_2)$, where $\cJ_1$ and $\cJ_2$ are
Jordan algebras of nondegenerate symmetric bilinear forms.

\smallskip

Moreover, the Generalized Tits Construction $\cT(X,Y)$ can be
assumed to be associated with algebras $(X_0,*)$ and $(Y_0,\star)$
having  skew-symmetric bilinear forms, and with symmetric maps $D$
and $d$ (see \cite[3.33]{BenZel}). In particular, it works when
$\cJ_i=k1\oplus V_i$ is the Jordan superalgebra of a nondegenerate
skew-symmetric bilinear form $\varphi_i$, $i=1,2$. Here the even
part of the superalgebra $\cJ_i$ is just $k1$, while the odd part is
$V_i$. With exactly the same arguments as above, it is checked that
the LY-algebras in Example \ref{ex:skewV1plusV2} with $\epsilon=-1$,
are exactly the LY-algebras obtained inside the Generalized Tits
Construction $\cT(\cJ_1,\cJ_2)$, where $\cJ_1$ and $\cJ_2$ are
Jordan superalgebras of nondegenerate skew-symmetric bilinear forms.
\hfill\qed
\end{remark}

\medskip

But the Generalized Tits Construction has its origin in the Classical Tits
Construction in \cite{Tits66}, which is the source of further
examples of LY-algebras.

\begin{examples}\label{ex:ClassicalTits} (\textbf{Classical Tits
Construction})

Let $\cC$ be a unital composition algebra with norm $n$ (see
\cite{Jac58}). Thus, $\cC$ is a finite dimensional unital
$k$-algebra, with the nondegenerate quadratic form $n:\cC\rightarrow
k$ such that $n(ab)=n(a)n(b)$ for any $a,b\in \cC$. Then, each
element satisfies the degree $2$ equation
\begin{equation}\label{eq:deg2}
a^2-\tr(a)a+n(a)1=0,
\end{equation}
where $\tr(a)=n(a,1)\,\bigl(=n(a+1)-n(a)-n(1)\bigr)$ is called the
\emph{trace}. The subspace of trace zero elements will be denoted by
$\cC_0$. The algebra $\cC$ is endowed of a canonical involution,
given by $\bar x= \tr(x)1-x$.

Moreover, for any $a,b\in \cC$, the linear map
$D_{a,b}:\cC\rightarrow \cC$ given by
\begin{equation}\label{eq:Dab}
D_{a,b}(c)=\frac{1}{4}\Bigl([[a,b],c]+3(a,c,b)\Bigr)
\end{equation}
where $[a,b]=ab-ba$ is the commutator, and $(a,c,b)=(ac)b-a(cb)$ the
associator, is a derivation: the \emph{inner derivation} determined
by the elements $a,b$ (see \cite[Chapter III, \S 8]{Sch}). These
derivations span the whole Lie algebra of derivations $\Der(\cC)$.
Moreover, they satisfy
\begin{equation}\label{eq:Dcyclic}
D_{a,b}=-D_{b,a},\quad D_{ab,c}+D_{bc,a}+D_{ca,b}=0,
\end{equation}
for any $a,b,c\in \cC$. The normalized trace here is
$t=\frac{1}{2}\tr$, and the multiplication $*$ on $\cC_0$ is just
$a*b=ab-t(ab)1=\frac{1}{2}[a,b]$, since $ab+ba=\tr(ab)1$, for any
$a,b\in \cC_0$.

The only unital composition algebras (recall that the ground field
is being assumed to be algebraically closed) are, up to isomorphism,
the ground field $k$, the cartesian product of two copies of the
ground field $\cK=k\times k$, the split quaternion algebra, which is
the algebra of two by two matrices $\cQ=\Mat_2(k)$, and the split
octonion algebra $\cO$ (see, for instance, \cite[Chapter 2]{ZSSS}).

\smallskip

On the other hand, given a finite dimensional unital Jordan algebra
$\J$ of degree $n$ (see \cite{JacJA}), we denote by $T(x)$ its {\em
generic trace} ($T(1)=n$), by $N(x)$ its  {\em generic norm} and by
$\J_0$ the subspace of trace zero elements. Then $t=\frac{1}{n}T$ is
a normalized trace. If $R_x$ is the right multiplication by $x$, the
map $d_{x,y}:\J \to \J$ given by
\begin{equation}\label{eq:dxy}
d_{x,y}(z)=[R_x,R_y]
\end{equation}
is a derivation.

Now, given a unital composition algebra $\cC$, one may consider the
subspace $H_n(\cC)$ of $n\times n$ hermitian matrices over $\cC$
with respect to the standard involution $(x_{ij})^*=({\bar
x}_{ji})$. This is a Jordan algebra with the symmetrized product
$x\bullet y=\frac{1}{2}(xy+yx)$ if either $\cC$ is associative or $n\leq 3$. For $\cC=k$, this is just the
algebra of symmetric $n\times n$ matrices, for $\cC=\cK$ this is
isomorphic to the algebra $\Mat_n(k)$ with the symmetrized product,
while for $\cC=\cQ$ this is the algebra of symmetric matrices for
the symplectic involution in $\Mat_n(\Mat_2(k))\simeq \Mat_{2n}(k)$.

Up to isomorphisms, the simple Jordan algebras are the following:
\begin{description}
\settowidth{\labelwidth}{XXX}%
\setlength{\leftmargin}{50pt}
\item[degree $1$] The ground field $k$.
\item[degree $2$] The Jordan algebras of nondegenerate symmetric
bilinear forms $\cJ(V,\varphi)$.
\item[degree $n\geq 3$] The Jordan algebras $H_n(k)$, $H_n(\cK)$ and
$H_n(\cQ)$, plus the degree three Jordan algebra $H_3(\cO)$.
\end{description}

For the simple Jordan algebras, the derivations $d_{x,y}$'s span the
whole Lie algebra of derivations $\Der(\cJ)$.

It turns out that the conditions in Proposition \ref{pr:TXYLie} are
satisfied if $X=\cC$ is a unital composition algebra and $Y=\cJ$ is
a degree three Jordan algebra (see \cite{Tits66} and
\cite[Proposition 3.24]{BenZel}). This is the Classical Tits
Construction, which gives rise to Freudenthal's Magic Square (Table
\ref{ta:FMS}), if the simple Jordan algebras of degree three are
taken as the second ingredient.

\begin{table}[h!]
$$
\vbox{\offinterlineskip
 \halign{\hfil\ $#$\ \hfil&%
 \vreglon #%
 &\hfil\ $#$\ \hfil&\hfil\ $#$\ \hfil
 &\hfil\ $#$\ \hfil&\hfil\ $#$\ \hfil\cr
 \bigstrut \cT(\cC,\cJ)&&H_3(k)&H_3(\cK)&H_3(\cQ)&H_3(\cO)\cr
 \multispan6{\hreglonfill}\cr
 k&&A_1&A_2&C_3&F_4\cr
 \bigstrut \cK&& A_2&A_2\oplus A_2&A_5&E_6\cr
 \bigstrut \cQ&&C_3 & A_5&D_6&E_7\cr
 \bigstrut \cO&& F_4& E_6& E_7&E_8\cr}}
$$
\medskip \caption{Freudenthal's Magic Square}\label{ta:FMS}
\end{table}

In the third and fourth rows of this Magic Square (that is, if the
composition algebras $\cQ$ and $\cO$ are considered), there appears
the reductive decomposition:
\[
\cT(\cC,\cJ)=\Bigl(\Der(\cC)\oplus\Der(\cJ)\Bigr)\oplus
\bigl(\cC_0\otimes \cJ_0\bigr),
\]
and this shows that, with $\dim\cC$ being either $4$ or $8$ and
$\cJ$ being a simple degree three Jordan algebra,
$\cC_0\otimes\cJ_0$ is an irreducible LY-algebra with binary and
ternary products given by
\begin{equation}\label{eq:binterClassicalTits}
\begin{split}
(a\otimes x)\cdot(b\otimes y)&=\frac{1}{2}[a,b]\otimes (x\bullet y-t(x\bullet
y)1),\\[4pt]
[a_1\otimes x_1,a_2\otimes x_2,a_3\otimes x_3]&=
  D_{a_1,a_2}(a_3)\otimes t(x_1\bullet x_2)x_3\\
  &\qquad\qquad + t(a_1a_2)a_3\otimes
  d_{x_1,x_2}(x_3)
\end{split}
\end{equation}
for any $a_1,a_2,a_3\in \cC$ and $x_1,x_2,x_3\in \cJ$.\hfill\qed

\end{examples}

Consider the third row of the Classical Tits Construction, with an
arbitrary unital Jordan algebra of degree $n$. Since $\cQ$ is
associative, the inner derivation $D_{a,b}$ in \eqref{eq:Dab} is
just $\frac{1}{4}\ad_{[a,b]}$, thus $\Der(\cQ)$ can be identified to
$\cQ_0$. The linear map $\bigl(\cQ_0\otimes
\cJ)\oplus\Der(\cJ)\rightarrow \cT(\cQ,\cJ)$, which is the identity
on $\Der(\cJ)$ and takes $a\otimes 1$ to $\ad_a\in\Der(\cQ)$ and
$a\otimes x$ to $2(a\otimes x)$, for any $a\in \cQ_0$ and
$x\in\cJ_0$, is then a bijection. Under this bijection, the
anticommutative product on $\cT(\cQ,\cJ)$ is transferred to the
following product on
$\g=\bigl(\cQ_0\otimes\cJ\bigr)\oplus\Der(\cJ)$:
\begin{equation}\label{eq:bracketonTKK}
\begin{split}
&\text{$\Der(\cJ)$ is a subalgebra of
$\g$},\\
&[d,a\otimes x]=a\otimes d(x),\\
&[a\otimes x,b\otimes y]=([a,b]\otimes x\bullet y)+2\tr(ab)d_{x,y}
\end{split}
\end{equation}
for any $a,b\in \cQ_0$, $x,y\in\cJ$ and $d\in \Der(\cJ)$.

For any Jordan algebra $\cJ$, Tits showed in \cite{Tits62} that this
bracket gives a Lie algebra $\g$. This is the well-known
Tits-Kantor-Koecher Lie algebra attached to $\cJ$ (see
\cite{Tits62,Kan,Ko67}). Therefore, the third row of the Classical
Tits Construction is valid for any unital Jordan algebra, not just
for degree three Jordan algebras.

\begin{remark}\label{re:TQJ}
Take, for instance, the Jordan algebra $\cJ=H_n(\cK)$, which
can be identified with the algebra of $n\times n$ matrices
$\Mat_n(k)$, but with the Jordan product $x\bullet
y=\frac{1}{2}(xy+yx)=\frac{1}{2}(l_x+r_x)(y)$, where $l_x$ and $r_x$
denote, respectively, the left and right multiplication in the
associative algebra $\Mat_n(k)$. Then for any $x,y\in \cJ$, the
inner derivation $d_{x,y}$ equals
$\frac{1}{4}[l_x+r_x,l_y+r_y]=\frac{1}{4}\ad_{[x,y]}$. Since
$\cQ=\Mat_2(k)$, for any $a,b\in \cQ_0$ and $x,y\in \cJ_0$, the Lie
bracket in \eqref{eq:bracketonTKK} gives, for any $a,b\in
\cQ_0=\frsl_2(k)$ and $x,y\in\cJ_0=\frsl_n(k)$:
\[
[a\otimes x,b\otimes y]
 =\frac{1}{n}\tr(xy)[a,b]+\frac{1}{2}[a,b]\otimes
(xy+yx-\frac{2}{n}\tr(xy)1)+\frac{1}{2}\tr(ab)[x,y].
\]
This is exactly the multiplication in \eqref{eq:slV1slV2} with
$n_1=2$ and $n_2=n$.

Actually, we can think of the construction in Example
\ref{ex:slV1otimesslV2} as a sort of Generalized Tits Construction
$\cT(H_{n_1}(\cK),H_{n_2}(\cK))$.

\smallskip

On the other hand, let $(V_2,\varphi_2)$ be a vector space endowed
with a nondegenerate $\epsilon$-symmetric bilinear form. Then
$\cJ=\sym(V_2,\varphi_2)$ is a Jordan algebra with the symmetrized
product $f\bullet g=\frac{1}{2}(fg+gf)$. If $\epsilon=1$ and $\dim
W=n$, then $\cJ$ is isomorphic to $H_n(k)$, while if $\epsilon=-1$
and $\dim W=2n$, then $\cJ$ is isomorphic to $H_n(\cQ)$. As in the
previous remark, and since $\cQ_0=\frsl_2(k)\simeq
\frsp(V_1,\varphi_1)$, where $V_1$ is a two-dimensional vector space
endowed with a nonzero skew-symmetric bilinear form $\varphi_1$, the
Lie bracket in \eqref{eq:bracketonTKK} is exactly the multiplication
in \eqref{eq:skewV1otimesV2}. This means that the irreducible
LY-algebra in Example \ref{ex:skewV1otimesV2} is the LY-algebra
obtained inside $\cT(\cQ,\sym(V_2,\varphi_2))$.

\smallskip

Finally, if again $(V_2,\varphi_2)$ is a vector space endowed with a
nondegenerate symmetric bilinear form and $\cJ_2=\cJ(V_2,\varphi_2)$
is the associated Jordan algebra, since $\ad_{\cQ_0}$ is isomorphic
to the orthogonal Lie algebra $\frso(\cQ_0,n\vert_{\cQ_0})$ (recall
that $n$ denotes the norm of the composition algebra $\cQ$, which in
this case coincides with the determinant of $2\times 2$ matrices),
it follows easily that $\cT(\cQ,\cJ_2)$ is isomorphic to
$\cT(\cJ_1,\cJ_2)$ (see Remark \ref{re:TJVJW}), where $\cJ_1$ is the
Jordan algebra of the nondegenerate symmetric bilinear form
$n\vert_{\cQ_0}$.

Therefore, concerning the LY-algebras inside the Classical Tits
Construction, only the cases $\cT(\cQ,H_3(\cO))$ and
$\cT(\cO,H_3(\cC))$ for $\cC=k$, $\cK$, $\cQ$, or $\cO$ are not
covered by the previous examples. \hfill\qed
\end{remark}

\bigskip

\subsection{Symplectic triple systems}

There is another type of examples of irreducible LY-algebras
(actually, of irreducible Lie triple systems) with exceptional
enveloping Lie algebra, which appears in terms of the so called
\emph{symplectic triple systems} or, equivalently, of Freudenthal
triple systems.

Symplectic triple systems were introduced first in \cite{YamAs}.
They are basic ingredients in the construction of some $5$-graded
Lie algebras (and hence $\Z_2$-graded algebras). They consist of a
vector space $\cT$ endowed with a trilinear product $\{xyz\}$ and a
nonzero skew-symmetric bilinear form $(x,y)$ satisfying some
conditions (see Definition 2.1 in \cite{Eld06} for a complete
description). Following \cite{Eld06}, from any symplectic triple
system $\cT$, a Lie algebra can be defined on the vector space
\begin{equation}\label{eq:gsymplectic}
\g(\cT)=\frsp(V)\oplus \bigl(V\otimes \cT\bigr) \oplus \Inder(\cT)
\end{equation}
where $V$ is a $2$-dimensional space endowed with a nonzero
skew-symmetric bilinear form $\varphi$ and $\Inder \cT=\eespan
\langle d_{x,y}=\{xy\cdot\}: x,y \in \cT\rangle$ is the Lie algebra
of inner derivations of $T$, by considering the anticommutative product
given by:

\begin{itemize}
\item $\frsp(V)$ and $\Inder(\cT)$ are Lie subalgebras of $\g(\cT)$,
\item $[\frsp(V),\Inder(\cT)]=0$,
\item $[f+d,v\otimes x]=f(v)\otimes x+v\otimes d(x)$,
\item with $\varphi_{u,v}=\varphi(u,.)v+\varphi(v,.)u$ (as usual),
\begin{equation}\label{productosimplectico}
[u\otimes x,v\otimes y]=(x,y)\varphi_{u,v}+\varphi(u,v)d_{x,y}\end{equation}
\end{itemize}
for all $f\in\frsp(V)$, $d\in\Inder(\cT)$, $u,v\in V$ and $x,y\in
\cT$. The decomposition $\g_{\bar0}=\frsp(V) \oplus \Inder(\cT)$ and
$\g_{\bar 1}= V\otimes \cT$ provides a $\Z_2$-graduation on
$\g(\cT)$, so the odd part  $\g_{\bar 1}= V\otimes \cT$ is a
LY-algebra with trivial binary product (Lie triple system). The
simplicity of $\g(\cT)$ is equivalent to that of $\cT$, which is
characterized by the nondegeneracy of the associated bilinear form
$(x,y)$. Note that viewing  $\frsp(V)$ as $\frsl(V)$, and $V$ as its
natural module, a $5$-grading is obtained by looking at the
eigenspaces of the adjoint action of a Cartan subalgebra in
$\frsl(V)$. This feature relates symplectic triples with
structurable algebras with a one-dimensional space of skew-hermitian
elements (see \cite{AF84}).

Symplectic triple systems are also related to Freudenthal triple
systems (see \cite{Mey68}) and to Faulkner ternary algebras
introduced in \cite{Fau71,FauFe}. In fact, in the simple case all
these systems are essentially equivalent (see \cite{Eld06}).

Among the simple symplectic triple systems (see \cite{Eld06}) use
will be made of the following ones:
\begin{equation}\label{eq:TJsymplectic}
\cT_{\J}=\Bigl\{\begin{pmatrix} \alpha & a \\ b &
 \beta \end{pmatrix}: \alpha, \beta \in k, a, b \in \J\Bigr\}
\end{equation}
where $\J=\J ordan(n,c)$ is the Jordan algebra of a nondegenerate
cubic form $n$ with basepoint (see \cite[II.4.3]{McC04} for a
definition) of one of the following types: $\J=k,
n(\alpha)=\alpha^3$ and $t(\alpha, \beta)=3\alpha \beta$ or
$\J=H_3(\cC)$ for a unital composition algebra $\cC$. Theorem 2.21
in \cite{Eld06} displays carefully the product and bilinear form for
the  triple systems $\cT_\J$ and Theorem 2.30 describes the
structure of $\g(\cT_\J)$. The information on the Lie algebras
involved is given in Table \ref{ta:symplecticsquare}.

\medskip

\begin{table}[h!]
\begin{center}
\begin{tabular}{|c|c|c|c|c|c|}
\hline
&&&&&\\
\raisebox{1.5ex}[0pt]{$\cT_\J$}
 & \raisebox{1.5ex}[0pt]{$\cT_k$}
  &\raisebox{1.5ex}[0pt]{$\cT_{H_3(k)}$}
  & \raisebox{1.5ex}[0pt]{$\cT_{H_3(\cK)}$}
  &\raisebox{1.5ex}[0pt]{$\cT_{H_3(\cQ)}$}
  &\raisebox{1.5ex}[0pt]{$\cT_{H_3(\cO)}$}\\
\hline
&&&&&\\
\raisebox{1.5ex}[0pt]{$\Inder \cT_\J$}
 & \raisebox{1.5ex}[0pt]{$A_1$}
 & \raisebox{1.5ex}[0pt]{$C_3$}
 & \raisebox{1.5ex}[0pt]{$A_5$}
 &  \raisebox{1.5ex}[0pt]{$D_6$}
 &\raisebox{1.5ex}[0pt]{$E_7$}\\
\hline
\hline
&&&&&\\
\raisebox{1.5ex}[0pt]{$\g(\cT_\J)$}
 &\raisebox{1.5ex}[0pt]{$G_2$}
 &\raisebox{1.5ex}[0pt]{$F_4$}
 & \raisebox{1.5ex}[0pt]{$E_6$}
 & \raisebox{1.5ex}[0pt]{$E_7$}
 & \raisebox{1.5ex}[0pt]{$E_8$}\\
\hline
\end{tabular}
\end{center}
\smallskip
\caption{$\g(\cT_\J)$-algebras}\label{ta:symplecticsquare}
\end{table}

From these symplectic triple systems, five new constructions of
exceptional Lie algebras, exactly one for each simple Jordan algebra
$\cJ$ above, and hence a new family of LY-algebras appears:

\begin{examples}\label{LY:symplectic}
Let $\cT_\J$ be the symplectic triple system defined in
\eqref{eq:TJsymplectic} where either  $\J$ is $k$ with norm
$n(\alpha)=\alpha^3$, or it is $H_3(\cC)$ with its generic norm for
a unital composition algebra $\cC$. The Lie algebra $\g(\cT_\J)$
given in \eqref{eq:gsymplectic} is simple and presents the reductive
decomposition $\g(\cT_\J)=\h\oplus \m$, where $\h=\frsp(V) \oplus
\Inder \cT_\J$ and $\m=V\otimes \cT_\J$. In these cases, $\h$ is
isomorphic to the semisimple Lie algebra of type $A_1\oplus L$, with
$L=A_1$, $C_3$, $A_5$, $D_6$ or $E_7$ as in Table
\ref{ta:symplecticsquare}. Moreover, $\h$ acts irreducible on $\m$
and therefore $V\otimes \cT_\cJ$ becomes an irreducible LY-algebra
with trivial binary  product (that is, it is an irreducible Lie
triple system) and ternary product given by:
\begin{equation}\label{productotriplesimplectico}
[u\otimes x, v\otimes y, w\otimes z]
 =(x,y)\varphi_{u,v}(w)\otimes z+\varphi(u,v)w\otimes \{xyz\}
\end{equation}
where $(x,y)$ and $\{xyz\}$ are the alternating form and the triple
product of $\cT_\J$. Its standard enveloping Lie algebra is, because
of Proposition \ref{pr:envueltasimple}, the Lie algebra
$\g(\cT_\cJ)$, whose type is given in Table
\ref{ta:symplecticsquare} too. \hfill $\square$

\end{examples}

\bigskip


\section{Classification}

As shown in Section 2, the irreducible Lie-Yamaguti
algebras of non-simple type are those for which the inner derivation
algebra is semisimple and nonsimple. According to Theorem
\ref{th:estructura}, the standard enveloping Lie algebras of such
LY-algebras are simple Lie algebras, so following Proposition
\ref{pr:envueltasimple} the classification of such LY-algebras can
be reduced to determine the reductive decompositions $\g=\h\oplus
\m$ satisfying
\begin{equation}\label{condicionescasonosimpleI}
\begin{array}{rl}
&  \mathrm{(a)}\quad
\textrm{$\g$ is a simple Lie algebra}\\
&\textrm{(b)}\quad
\textrm{$\h$ is a semisimple and non simple subalgebra of $\g$} \\
& \mathrm{(c)}\quad
\textrm{$\m$ is an irreducible $\ad\h$-module}
\end{array}
\end{equation}

In this section we classify the irreducible LY-algebras of
non-simple type and, first of all, the irreducible LY-algebras whose
standard enveloping is classical, that is, isomorphic to either
$\frsl_n(k)$ (special), $n \ge 2$, $\so_n(k)$ (orthogonal), $n \ge
3$, or $\frsp_{2n}(k)$ (symplectic), $n \ge 1$.

\begin{theorem}\label{th:irreduciblesclasicas}
Let  $(\m, x\cdot y,[x,y,z])$ be an irreducible LY-algebra of non-simple type whose standard enveloping Lie algebra is  simple and classical. Then, up
to isomorphism, either:

\begin{enumerate}

\item[(i)]
$\m=\frsl(V_1)\otimes \frsl(V_2)$
for some vector spaces $V_1$ and $V_2$ with $2\leq\dim V_1\leq \dim
V_2$ and $(\dim V_1,\dim V_2)\ne (2,2)$, as in Example \ref{ex:slV1otimesslV2}, with binary and ternary
products given in \eqref{eq:binterslV1slV2}. \\
In this case the standard enveloping Lie algebra is isomorphic to
the special linear algebra $\frsl(V_1\otimes V_2)$ and the inner
derivation algebra to $\frsl(V_1)\oplus\frsl(V_2)$.

\item[(ii)]
$\m=V_1\otimes V_2$ for some vector spaces $V_1$ and $V_2$ endowed with
nondegenerate symmetric bilinear forms with $3\leq \dim V_1\leq \dim
V_2$ as in Example \ref{ex:skewV1plusV2}. This is an irreducible Lie
triple system, whose triple product is given in
\eqref{eq:terV1otimesV2}. Alternatively, this is the LY-algebra
inside the Tits construction $\cT(\cJ(V_1),\cJ(V_2))$ for two Jordan
algebras of symmetric bilinear forms in Remark \ref{re:TJVJW}.
\\
In this case the standard enveloping Lie algebra is isomorphic to
the orthogonal Lie algebra $\frso(V_1\oplus V_2)$ and the inner
derivation algebra to $\frso(V_1)\oplus\frso(V_2)$.

\item[(iii)]
$\m=V_1\otimes V_2$ for some vector spaces $V_1$ and $V_2$ endowed
with nondegenerate skew-symmetric bilinear forms with $2\leq \dim
V_1\leq \dim V_2$ as in Example \ref{ex:skewV1plusV2}. This is an
irreducible Lie triple system, whose triple product is given in
\eqref{eq:terV1otimesV2}. Alternatively, this is the LY-algebra
inside the Tits construction $\cT(\cJ(V_1),\cJ(V_2))$ for two Jordan
superalgebras of skew-symmetric bilinear forms in Remark
\ref{re:TJVJW}.
\\
In this case the standard enveloping Lie algebra is isomorphic to
the symplectic Lie algebra $\frsp(V_1\oplus V_2)$ and the inner
derivation algebra to $\frsp(V_1)\oplus\frsp(V_2)$.

\item[(iv)]
$\m=\frsp(V_1)\otimes\cJ_0$, where $V_1$ is a
two-dimensional vector space endowed with a nonzero skew-symmetric
bilinear form and $\cJ$ is the Jordan algebra $H_n(k)$ for $n\geq 3$
(that is, isomorphic to $\sym(V_2,\varphi_2)$, for a vector space
$V_2$ of dimension $n$ endowed with a nondegenerate symmetric
bilinear form $\varphi_2$). The binary and ternary products are
given in \eqref{eq:binterskewV1otimesV2}. Alternatively, this is the
LY-algebra inside the Tits construction $\cT(\cQ,H_n(k))$ (see
Remark \ref{re:TQJ}).
\\
In this case the standard enveloping Lie algebra is isomorphic to
the symplectic Lie algebra $\frsp(V_1\otimes V_2)\simeq
\frsp_{2n}(k)$, and the inner derivation algebra to
$\frsp(V_1)\oplus\frso(V_2)$.

\item[(v)]
$\m=\frsp(V_1)\otimes\cJ_0$, where $V_1$ is a
two-dimensional vector space endowed with a nonzero skew-symmetric
bilinear form and $\cJ$ is the Jordan algebra $H_n(\cQ)$ for
$n\geq 3$ (that is, isomorphic to $\sym(V_2,\varphi_2)$, for a
vector space $V_2$ of dimension $2n$ endowed with a nondegenerate
skew-symmetric bilinear form $\varphi_2$). The binary and ternary
products are given in \eqref{eq:binterskewV1otimesV2}.
Alternatively, this is the LY-algebra inside the Tits construction
$\cT(\cQ,H_n(\cQ))$ (see Remark \ref{re:TQJ}).
\\
In this case the standard enveloping Lie algebra is isomorphic to
the orthogonal Lie algebra $\frso(V_1\otimes V_2)\simeq
\frso_{4n}(k)$, and the inner derivation algebra to
$\frsp(V_1)\oplus\frsp(V_2)$.

\end{enumerate}
\end{theorem}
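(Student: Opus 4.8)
The plan is to reduce, via Corollary \ref{co:nonadj} and the setup \eqref{condicionescasonosimpleI}, to the classification of reductive decompositions $\g=\h\oplus\m$ in which $\g$ is a classical simple Lie algebra, $\h$ is semisimple and non-simple, and $\m=\h^\perp$ is an irreducible $\h$-module. The decisive tool will be the natural module $W$ of $\g$ (so that $\h\subseteq\g\subseteq\gl(W)$): I would study the restriction of $W$ to $\h$, which is completely reducible, and read off the adjoint module $\g$ from it. The crucial remark is that $\g=\h\oplus\m$ contains no trivial $\h$-submodule, since $\h$ is semisimple (its adjoint module has no trivial constituent) and $\m$ is irreducible and nontrivial (as $\g$ is simple, $[\h,\m]\ne0$). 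Translating ``no trivial constituent in $\g$'' into a condition on $W$ is what drives the whole argument.

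For $\g=\frsl(W)$ one has $\gl(W)=\frsl(W)\oplus k\,\mathrm{id}$, so the number of trivial $\h$-constituents of $\frsl(W)$ equals $\dimens\End_\h(W)-1$; hence $\End_\h(W)=k$ and $W$ is $\h$-irreducible. Writing $\h=\h_1\oplus\dots\oplus\h_s$ ($s\ge2$) and $W=W_1\boxtimes\dots\boxtimes W_s$ accordingly, I would count constituents: $\gl(W)=\bigotimes_a(W_a\otimes W_a^*)$ and each factor $W_a\otimes W_a^*$ contains at least the (distinct) trivial module and adjoint $\h_a$, so has at least two constituents. Since $\frsl(W)=\h\oplus\m$ has exactly $s+1$ constituents, $\gl(W)$ has $s+2$, forcing $\prod_a c_a=s+2$ with every $c_a\ge2$; as $2^s>s+2$ for $s\ge3$, this gives $s=2$ and $c_1=c_2=2$, i.e. $W_a\otimes W_a^*=k\oplus\h_a$. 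The last equality says the image of $\h_a$ fills $\frsl(W_a)$, so $\h_a=\frsl(W_a)$ with $W_a$ its natural module; this is exactly case (i), the coincidence at $(\dimens V_1,\dimens V_2)=(2,2)$ with case (ii)/(iii) being recorded separately.

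For $\g=\frso(W)$ (resp. $\frsp(W)$) the space of invariant bilinear forms on $W$ has dimension $\dimens\End_\h(W)$ and splits into symmetric and skew parts; the number of trivial $\h$-constituents of $\frso(W)\cong\Lambda^2W$ (resp. $\frsp(W)\cong S^2W$) is the number of invariant skew (resp. symmetric) forms, which must be $0$, while the defining form of $\g$ provides one invariant symmetric (resp. skew) form. A standard analysis of invariant forms on isotypic components then forces every irreducible constituent of $W$ to be self-dual of the correct type and, when repeated, to carry the wrong symmetry; this rules out non-self-dual constituents and higher multiplicities. I would then separate two cases. If $W$ is $\h$-reducible, the cross terms $M_i\otimes M_j$ all lie in $\m$, so irreducibility forces exactly two constituents $W=M_1\oplus M_2$, with each $\Lambda^2M_i$ (resp. $S^2M_i$) entirely inside $\h$; the latter means the $\h$-image fills $\frso(M_i)$ (resp. $\frsp(M_i)$), so $M_i$ is a natural module, giving cases (ii) and (iii). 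If $W$ is $\h$-irreducible, $W=W_1\boxtimes\dots\boxtimes W_s$, and decomposing $\Lambda^2W$ (resp. $S^2W$) over the factors I would again bound the number of constituents to get $s=2$; the cross term $\m$ then has the shape $\frsp(W_1)\otimes(\text{primitive }S^2_0\text{ or }\Lambda^2_0\text{ part of }W_2)$ together with a symmetric partner, and its irreducibility forces the primitive part of one factor to vanish, i.e. that factor to be two-dimensional symplectic, while the other factor must be a natural module by the filling argument, giving cases (iv) and (v).

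The main obstacle will be the $\frso$/$\frsp$ analysis when $W$ is irreducible: carrying out the decomposition of $\Lambda^2(W_1\otimes\dots\otimes W_s)$ and $S^2(\cdots)$ cleanly, bounding its constituents to force $s=2$ for \emph{every} admissible pair of self-dual modules (not just the natural ones), and then extracting from irreducibility of $\m$ the precise constraint that one tensor factor be two-dimensional and the other a natural module. Compared with this, the $\frsl$ case and the reducible $\frso$/$\frsp$ case are routine. I would finish by matching each surviving configuration with the corresponding Example of Section \ref{Section:Examples}, identifying $\g(\m)$ and $D(\m,\m)$ as stated, and disposing of the low-dimensional coincidences and exclusions (such as $\frso_4$ failing to be simple and the overlap at $(2,2)$) by hand.
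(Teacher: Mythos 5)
Your proposal is correct in outline, but it follows a genuinely different route from the paper's proof. The paper leans on two structural facts already established: $D(\m,\m)$ is a \emph{maximal} subalgebra of $\g(\m)$ (Theorem \ref{th:estructura}) and $\m$ is the unique $\ad\h$-invariant complement, namely $\h^\perp$ (Proposition \ref{pr:envueltasimple}). For $\frsl(V)$ it then argues that $V$ must be $\h$-irreducible (else $\h$ sits inside a non-maximal $\frsl(V_1)\oplus\frsl(V_2)$), writes $V=V_1\otimes V_2$ and invokes maximality to force $\h$ to be all of $\frsl(V_1)\oplus\frsl(V_2)$; the $\anti(V,\varphi)$ case is handled by the same maximality argument after splitting into the subcases ``$V$ reducible with a non-isotropic irreducible summand'', ``$V$ reducible with all summands isotropic'' (excluded by maximality) and ``$V$ irreducible''. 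You instead replace maximality by the observation that $\g=\h\oplus\m$ has no trivial $\h$-constituent, and run a count of irreducible constituents and invariant bilinear forms of the natural module; this buys you the ``filling'' conclusions ($\h_a=\frsl(W_a)$, $\h=\anti(M_1)\oplus\anti(M_2)$, etc.) directly from multiplicity bookkeeping, at the price of more delicate combinatorics. Your $\frsl$ count is complete and correct. The one place where your plan is genuinely only a sketch is the one you flag yourself: for $\g=\frso(W)$ or $\frsp(W)$ with $W$ irreducible and $\h$ having $s$ simple factors, the naive bound (the $2^{s-1}$ summands of $\Lambda^2(W_1\otimes\cdots\otimes W_s)$ or $S^2(\cdots)$ versus the $s+1$ constituents of $\h\oplus\m$) does \emph{not} by itself exclude $s=3$; you must additionally locate the $s$ adjoint constituents inside $\bigoplus_a\anti(W_a)\otimes(\bigotimes_{b\ne a}k1_{W_b})$ and observe that the remaining pieces (which all inject into $\g/\h\cong\m$) cannot form a single irreducible unless $s=2$ and one factor is two-dimensional symplectic, so that $\sym_0$ of that factor vanishes. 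This is routine but needs to be written out; the paper's maximality argument sidesteps it entirely. Your handling of the low-dimensional exclusions and the $(2,2)$ overlap with case (ii) matches the paper's.
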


\begin{proof}
The irreducible LY-algebras of non-simple type with classical
enveloping Lie algebras are those obtained from reductive
decompositions $\g=\h \oplus \m$ satisfying
\eqref{condicionescasonosimpleI}, where $\g$ is a classical simple
Lie algebra and $\h=\h_1\oplus \h_2$, $0\ne\h_i$ semisimple. In this
case, $\h$ is a maximal subalgebra of $\g$ and Proposition
\ref{pr:envueltasimple} asserts that $\m$ is exactly the orthogonal
complement of $\h$ with respect to the Killing form of $\g$.

Suppose first that $\g$ is (isomorphic to) the special linear Lie
algebra $\frsl(V)$ for some vector space $V$ of dimension $\geq 2$.
If $V$ were not irreducible as a module for $\h$, then by Weyl's
Theorem, there would exist $\h$-invariant subspaces $V_1$ and $V_2$
with $V=V_1\oplus V_2$, but then $\h$ would be contained in the
subalgebra  $\frsl(V_1)\oplus\frsl(V_2)$ which is not maximal.
Therefore, $V$ is irreducible too as a module for $\h$. Hence, up to
isomorphism, the $\h$-module $V$ decomposes as a tensor product
$V=V_1\otimes V_2$ for some irreducible module $V_1$ for $\h_1$ and
some irreducible module $V_2$ for $\h_2$. It can be assumed that
$2\leq \dim V_1\leq \dim V_2$. Then $\h$ is contained in the
subalgebra $\frsl(V_1)\otimes k1_{V_2}\oplus k1_{V_1}\otimes
\frsl(V_2)$ of $\frsl(V_1\otimes V_2)$ and, by maximality, $\h$ is
exactly this subalgebra. Hence, we are in the situation of Example
\ref{ex:slV1otimesslV2} and Proposition \ref{pr:envueltasimple}
shows that the only complementary subspace to $\h$ in $\g$ which is
$\h$-invariant is its orthogonal complement relative to the Killing
form. This uniqueness shows that we are dealing with the irreducible
LY-algebra in Example \ref{ex:slV1otimesslV2}, thus obtaining case
(i).

\medskip

Suppose now that $\g$ is isomorphic to the Lie algebra of skew
symmetric linear maps of a vector space $V$ endowed with a
nondegenerate symmetric or skew-symmetric bilinear map $\varphi$.

If $V$ is not irreducible as a module for $\h$, and $W$ is an
irreducible $\h$-submodule of $V$ with $\varphi(W,W)\ne 0$, then by
irreducibility the restriction of $\varphi$ to $W$ is nondegenerate,
so $V$ is the orthogonal sum $V=W\oplus W^{\perp}$. By maximality of
$\h$, $\h$ is precisely the subalgebra
$\anti(W)\oplus\anti(W^{\perp})$, and the situation of Example
\ref{ex:skewV1plusV2} appears. Because of the uniqueness in
Proposition \ref{pr:envueltasimple}, items (ii) (for symmetric
$\varphi$) or (iii) (for skew-symmetric $\varphi$) are obtained.

On the other hand, if $V$ is not irreducible as a module for $\h$,
and the restriction of $\varphi$ to any irreducible $\h$-submodule
of $V$ is trivial then, by Weyl's theorem on complete reducibility, given an irreducible submodule $W_1$, there is another irreducible submodule $W_2$ with $\varphi(W_1,W_2)\ne 0$. Since $\varphi(W_1,W_1)=0=\varphi(W_2,W_2)$, $W_1$ and $W_2$ are contragredient modules and $V=(W_1\oplus W_2)\oplus (W_1\oplus W_2)^\perp$. Proceeding in the same way with $(W_1\oplus W_2)^\perp$, it is obtained that $V=V_1\oplus V_2$ for
some $\h$-invariant subspaces $V_1$ and $V_2$ such that the
restrictions of $\varphi$ to $V_1$ and $V_2$ are trivial. Then $\h$
is contained in $\{f\in \anti(V,\varphi): f(V_i)\subseteq V_i,\,
i=1,2\}$, which is $\varphi_{V_1,V_2}$. But this contradicts the
maximality of $\h$, since $\varphi_{V_1,V_2}$ is contained in the
subalgebra $\varphi_{V_1,V_2}\oplus\varphi_{V_1,V_1}$.

Finally, if $V$ remains irreducible as a module for $\h$ then, as
above, there is a decomposition $V=V_1\otimes V_2$ for an
irreducible module $V_i$ for $\h_i$, $i=1,2$, endowed with a
nondegenerate symmetric or skew-symmetric bilinear form $\varphi_i$
such that $\varphi=\varphi_1\otimes\varphi_2$. By maximality of $\h$
and Proposition \ref{pr:envueltasimple}, we are in the situation of
Example \ref{ex:skewV1otimesV2}, thus obtaining cases (iv) and (v)
depending on $\varphi$ being either skew-symmetric or symmetric
respectively.
\end{proof}

\medskip

Now it is time to deal with the irreducible LY-algebras with
exceptional standard enveloping Lie algebras. These algebras appear
inside reductive decompositions $\g=\h\oplus \m$ satisfying
\eqref{condicionescasonosimpleI} with $\g$ a simple exceptional Lie
algebra, and hence of type $G_2$, $F_4$, $E_6$, $E_7$ or  $E_8$.
Over the complex field, a thorough description of the maximal
semisimple subalgebras  of the simple exceptional Lie algebras is
given in \cite{Dyn}. The following result shows that the reductive
decomposition we are looking for  can be transferred to the complex
field, so the results in \cite{Dyn} can be used over our ground
field to get the classification of the exceptional irreducible
LY-algebras of non-simple type.

\begin{lemma}\label{le:reduccionacomplejos}
Let $\g=\h\oplus\m$ be a reductive decomposition over our ground
field $k$. Then there is an algebraically closed subfield $k'$ of
$k$, an embedding $\iota: k'\rightarrow \C$ and a Lie algebra $\g'$
over $k'$ with a reductive decomposition $\g'=\h'\oplus \m'$ such
that $\g=\g'\otimes_{k'}k$, $\h=\h'\otimes_{k'}k$ and
$\m=\m'\otimes_{k'}k$.
\end{lemma}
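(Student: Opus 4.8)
The plan is to run a Lefschetz-principle descent: the bracket of $\g$, together with the subalgebra $\h$ and the invariant complement $\m$, is encoded by finitely many structure constants, so the whole reductive decomposition can already be defined over a countable algebraically closed subfield of $k$, and such a field embeds into $\C$ for transcendence-degree reasons.

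First I would fix a basis of $\g$ adapted to the decomposition, say $\{h_1,\dots,h_p\}$ a basis of $\h$ and $\{m_1,\dots,m_q\}$ a basis of $\m$, so that $\{h_i\}\cup\{m_j\}$ is a basis of $\g$. Expanding the bracket in this basis produces structure constants in $k$, and the hypotheses that $\h$ is a subalgebra and that $[\h,\m]\subseteq\m$ are precisely the vanishing of the $\m$-components of $[h_i,h_j]$ and of the $\h$-components of $[h_i,m_j]$. Let $k_0$ be the subfield of $k$ generated over $\Q$ by all these (finitely many) structure constants, and let $k'$ be the relative algebraic closure of $k_0$ in $k$, i.e. the set of elements of $k$ algebraic over $k_0$. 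Then $k'$ is an algebraically closed subfield of $k$: any root in $k$ of a polynomial over $k'$ is algebraic over $k_0$ by transitivity, hence lies in $k'$. Moreover $k_0$ is finitely generated over $\Q$, so $k'$ is an algebraic closure of a countable field and is therefore countable, with $\mathrm{tr.deg}_\Q k'=\mathrm{tr.deg}_\Q k_0$ finite.

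Next I would set $\g'=\eespan_{k'}\{h_1,\dots,h_p,m_1,\dots,m_q\}$ inside $\g$, together with $\h'=\eespan_{k'}\{h_i\}$ and $\m'=\eespan_{k'}\{m_j\}$. Because every structure constant lies in $k_0\subseteq k'$, this $k'$-subspace is closed under the bracket of $\g$, hence is a $k'$-Lie algebra (antisymmetry and the Jacobi identity being inherited from $\g$). The prescribed vanishing of structure constants makes $\h'$ a subalgebra and $\m'$ an $\ad_{\g'}\h'$-invariant complement, so $\g'=\h'\oplus\m'$ is a reductive decomposition over $k'$. Since $\{h_i\}\cup\{m_j\}$ is simultaneously a $k'$-basis of $\g'$ and a $k$-basis of $\g$, the natural map $\g'\otimes_{k'}k\to\g$, $x\otimes\lambda\mapsto\lambda x$, is an isomorphism of $k$-Lie algebras carrying $\h'\otimes_{k'}k$ onto $\h$ and $\m'\otimes_{k'}k$ onto $\m$; these are exactly the claimed base-change identities.

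Finally I would embed $k'$ into $\C$. Choosing a transcendence basis $B$ of $k'$ over $\Q$, which is finite, while a transcendence basis of $\C$ over $\Q$ has the cardinality of the continuum, I may inject $B$ into the latter and thereby obtain a field embedding $\Q(B)\hookrightarrow\C$. Since $k'$ is an algebraic closure of $\Q(B)$ and $\C$ is algebraically closed, the isomorphism-extension theorem extends this to the desired embedding $\iota:k'\to\C$. The only genuinely nontrivial point is this last transcendence-degree comparison, which is precisely where the uncountability of $\C$ enters; everything else is a routine check that the reductive structure is defined over the field generated by its structure constants.
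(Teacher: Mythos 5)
Your proposal is correct and follows essentially the same route as the paper: adapt a basis to the decomposition, pass to the field generated over $\Q$ by the structure constants, take its relative algebraic closure in $k$, and use the finite transcendence degree over $\Q$ together with the isomorphism-extension theorem to embed that field into $\C$. The only cosmetic difference is that the paper embeds the finitely generated field into $\C$ first and then extends to the algebraic closure, whereas you form the algebraically closed subfield first and then embed it; this changes nothing of substance.
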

\begin{proof}
Let $\{x_i: i=1,\ldots,n\}$ be a basis of $\g$ over $k$ such that
$\{x_i: i=1,\ldots,m\}$ is a basis of $\h$ and
$\{x_{m+1},\ldots,x_n\}$ is a basis of $\m$ ($1<m<n$). For any
$1\leq i\leq j\leq n$, $[x_i,x_j]=\sum_{i=1}^n\alpha_{ij}^kx_k$, for
some $\alpha_{ij}^k\in k$ (the structure constants). Note that the
decomposition being reductive means that $\alpha_{ij}^k=0$ for
$1\leq i\leq j\leq m$ and $m+1\leq k\leq n$ ($\h$ is a subalgebra),
and for $1\leq i,k\leq m$ and $m+1\leq j\leq n$. Let $k''$ be the
subfield of $k$ generated (over the rational numbers) by the
structure constants. Since the  transcendence degree of the
extension $\C/\Q$ is infinite, there is an embedding
$\iota'':k''\rightarrow \C$. Finally, let $k'$ be the algebraic
closure of $k''$ on $k$. By uniqueness of the algebraic closure,
$\iota''$ extends to an embedding $\iota:k'\rightarrow \C$. Now, it
is enough to take $\h'=\sum_{i=1}^m k'x_i$,
$\m'=\sum_{i=m+1}^nk'x_i$ and $\g'=\h'\oplus \m'$.
\end{proof}

Therefore, if $\g=\h\oplus\m$ is a reductive decomposition of a
simple exceptional Lie algebra over our ground field $k$, with $\h$
semisimple but not simple, and with $\m$ an irreducible module for
$\h$, take $\g'$, $\h'$ and $\m'$ as in the previous Lemma \ref{le:reduccionacomplejos}. Then there exists
the reductive decomposition $\tilde\g=\tilde\h\oplus\tilde\m$ over
$\C$, where $\tilde\g=\g'\otimes_{k'}\C$ (via $\iota$) and also
$\tilde\h=\h'\otimes_{k'}\C$ and $\tilde\m=\m'\otimes_{k'}\C$. Since
$\g$ is simple and $\g'$ is a form of $\g$, $\g'$ is simple too and
of the same type as $\g$, and hence so is $\tilde \g$. In the same
vein, $\h$, $\h'$ and $\tilde\h$ are semisimple Lie algebras of the
same type, and the highest weights of $\m$ and $\tilde\m$
``coincide'', as both are obtained from the highest weight of $\m'$
relative to a Cartan subalgebra and an ordering of the roots for
$\h'$.

The displayed list of maximal subalgebras of complex semisimple Lie
algebras given in \cite{Dyn} distinguishes the regular maximal
subalgebras and the so called $S$-subalgebras. Following \cite{Dyn},
a subalgebra $\rs$ of a semisimple Lie algebra $\g$ is said to be
{\em regular} in case  $\rs$ has a basis formed by some elements of
a Cartan subalgebra of $\g$ and some elements of its root spaces. On
the other hand, an {\em $S$-subalgebra} is a subalgebra $\s$ not
contained in any regular subalgebra. We observe that maximal
subalgebras are either regular or $S$-subalgebras and regular
maximal subalgebras have maximal rank, that is, the rank of the
semisimple algebras they are living in. Hence, the inner derivation
Lie algebras of the irreducible LY-algebras belong to one of these
classes of subalgebras and, in case of nonzero binary product, they
are necessarily $S$-subalgebras:

\begin{lemma}\label{Ssubalgebras}
Let $\m$ be an irreducible LY-algebra which is not of adjoint type.
If the binary product in $\m$ is not trivial, then the inner
derivation Lie algebra $D(\m,\m)$ is a maximal semisimple {\em
$S$-subalgebra} of the simple standard enveloping Lie algebra of
$\m$.
\end{lemma}
\begin{proof}
Following Theorem \ref{th:estructura} and Corollary \ref{co:nonadj},
$D(\m,\m)$ is a maximal semisimple subalgebra of the simple
enveloping Lie algebra $\g(\m)$ and $\m$ is a selfdual
$D(\m,\m)$-module. Let $\lambda$ be the highest weight of $\m$ as a
module for $D(\m,\m)$ with respect to a Cartan subalgebra $H$ of
$D(\m,\m)$ and an ordering of the roots, so $\m=V(\lambda)$ as a
module. Then $-\lambda$ is its lowest weight ($\m$ is self dual).
Since the binary product on $\m$ is nonzero, so is the vector space
$\Hom_{D(\m,\m)}(V(\lambda)\otimes V(\lambda), V(\lambda))$.
Moreover, any map $\varphi$ in this space is determined by
$\varphi(v_\lambda \otimes v_{-\lambda})\in V(\lambda)_0$ with
$v_\lambda$ and $ v_{-\lambda}$ weight vectors of weights $\lambda$
and $-\lambda$, and $V(\lambda)_0$ the zero weight space in
$V(\lambda)$. Then  $V(\lambda)_0 $ must be non trivial and, as
$V(\lambda)_0 $ is contained in the centralizer of $H$ in $\g(\m)$,
the subalgebra $H$ is not a Cartan subalgebra of $\g(\m)$.
Therefore, $D(\m,\m)$ is not a maximal rank subalgebra of $\g(\m)$
and hence it is an $S$-subalgebra.
\end{proof}

The irreducible LY-algebras of non-simple type whose standard
enveloping Lie algebra is exceptional are classified in the next
result.

\begin{theorem}\label{th:irreduciblesexcepcionales}
Let $(\m, x\cdot y,[x,y,z])$ be an irreducible LY-algebra of
non-simple type whose standard enveloping Lie algebra is a simple
exceptional Lie algebra. Then, up to isomorphism, either:

\begin{enumerate}

\item[(i)]
$\m=V\otimes \cT_\J$, where $V$ is a two dimensional vector space
endowed with a nonzero skew-symmetric bilinear form and $\cT_\J$ is
the symplectic triple system associated to a Jordan algebra $\J$
isomorphic  either to $k$, $H_3(k)$, $H_3(\cK)$, $H_3(\cQ)$ or
$H_3(\cO)$, as in Example \ref{LY:symplectic}. This is an
irreducible Lie triple system whose ternary product is given in
\eqref{productotriplesimplectico}.
\\
In this case, the standard enveloping Lie algebra is the exceptional
simple Lie algebra of type $G_2$ for $\cJ=k$, $F_4$ for
$\cJ=H_3(k)$, $E_6$ for $\cJ=H_3(\cK)$, $E_7$ for $\cJ=H_3(\cQ)$ and
$E_8$ for $\cJ=H_3(\cO)$, while its inner derivation Lie algebra is
isomorphic respectively to $\spl_2(k)\oplus \spl_2(k)$,
$\spl_2(k)\oplus \frsp_6(k)$, $\spl_2(k)\oplus \spl_6(k)$,
$\spl_2(k)\oplus \so_{12}(k)$ and $\spl_2(k)\oplus E_7$.

\item[(ii)]
$\m=\cO_0\otimes \J_0$, where $\cJ$ is one of the Jordan algebras
$H_3(k)$, $H_3(\cK)$, $H_3(\cQ)$ or $H_3(\cO)$. This is the
irreducible LY-algebra inside the Classical Tits Construction
$\cT(\cO,\J)$ in Example \ref{ex:ClassicalTits}. The binary and
ternary products are given in \eqref{eq:binterClassicalTits}.
\\
In this case, the standard enveloping Lie algebra is the exceptional
simple Lie algebra of type $F_4$ for $\cJ=H_3(k)$, $E_6$ for
$\cJ=H_3(\cK)$, $E_7$ for $\cJ=H_3(\cQ)$ and $E_8$ for
$\cJ=H_3(\cO)$, while its inner derivation Lie algebra is isomorphic
respectively to $G_2\oplus \spl_2(k)$, $G_2\oplus \spl_3(k)$,
$G_2\oplus \frsp_6(k)$ and $G_2\oplus F_4$.

\item[(iii)]
$\m=\cQ_0\otimes H_3(\cO)_0$  is the irreducible LY-algebra inside
the Classical Tits Construction $\cT(\cQ,H_3(\cO))$ in Example
\ref{ex:ClassicalTits}. The binary and ternary products are given in
\eqref{eq:binterClassicalTits}.
\\
In this case, the standard enveloping Lie algebra is the exceptional
simple Lie algebra of type $E_7$, while its inner derivation Lie
algebra is isomorphic $\spl_2(k)\oplus F_4$.

\end{enumerate}

\end{theorem}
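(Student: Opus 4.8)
The plan is to classify, over $\C$, all maximal semisimple non-simple subalgebras $\h=\h_1\oplus\h_2$ of the exceptional simple Lie algebras whose orthogonal complement $\m=\h^\perp$ is $\ad\h$-irreducible, and then to match each of them with an example from Section \ref{Section:Examples}. First I would invoke Lemma \ref{le:reduccionacomplejos} to assume the reductive decomposition $\g=\h\oplus\m$ defined over $\C$, without changing the types of $\g$ and $\h$ nor the highest weight of $\m$; this is what legitimizes the use of Dynkin's classification \cite{Dyn}. By Theorem \ref{th:estructura} and Corollary \ref{co:nonadj}, $\h=D(\m,\m)$ is a maximal subalgebra and $\m=\h^\perp$. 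Since a maximal subalgebra is either regular or an $S$-subalgebra, and a semisimple regular maximal subalgebra has maximal rank, I would split the analysis into these two cases.

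In the maximal rank case, Lemma \ref{Ssubalgebras} forces the binary product of $\m$ to vanish, so that $\g=\h\oplus\m$ is a $\Z_2$-graded (symmetric) decomposition. I would list, by the Borel--de Siebenthal procedure of deleting one node from the extended Dynkin diagram, the non-simple maximal rank maximal subalgebras of $G_2,F_4,E_6,E_7,E_8$, and decide irreducibility of $\m$ by a dimension count on the branching of the adjoint module. The survivors are exactly those of type $A_1\oplus L$ with $L=A_1,C_3,A_5,D_6,E_7$; there $\m\cong V\otimes W$, with $V$ the natural $2$-dimensional module for $\frsp(V)=A_1$ and $W$ an irreducible $L$-module of dimension $4,14,20,32,56$ respectively, and comparison with the $\Z_2$-grading \eqref{eq:gsymplectic} and Table \ref{ta:symplecticsquare} identifies $W$ with the symplectic triple system $\cT_\J$, yielding case (i). The remaining non-simple maximal rank candidates, namely $A_2\oplus A_2\subset F_4$, $A_2\oplus A_2\oplus A_2\subset E_6$, $A_2\oplus A_5\subset E_7$, and $A_2\oplus E_6$, $A_4\oplus A_4\subset E_8$, are discarded since for them $\m$ splits as a direct sum of proper, mutually contragredient submodules.

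In the $S$-subalgebra case I would run through Dynkin's tables of maximal $S$-subalgebras, keep the non-simple ones, and again test the irreducibility of $\m=\h^\perp$ by decomposing the adjoint module. The subalgebras for which $\m$ remains irreducible are $G_2\oplus A_1\subset F_4$, $G_2\oplus A_2\subset E_6$, $G_2\oplus C_3\subset E_7$, $G_2\oplus F_4\subset E_8$ and, in addition, $A_1\oplus F_4\subset E_7$. Using $\Der(\cO)=G_2$, $\Der(\cQ)=A_1$ and $\Der(H_3(\cC))=A_1,A_2,C_3,F_4$ for $\cC=k,\cK,\cQ,\cO$, one recognizes the two tensor factors of $\m$ as $\cO_0$ and $H_3(\cC)_0$ (respectively $\cQ_0$ and $H_3(\cO)_0$), so that these are precisely the Lie-Yamaguti algebras inside the Classical Tits Construction $\cT(\cO,\cJ)$ of Example \ref{ex:ClassicalTits} (respectively $\cT(\cQ,H_3(\cO))$), giving cases (ii) and (iii). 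All other non-simple maximal $S$-subalgebras yield a reducible $\m$ and are excluded.

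Finally, to see that each surviving abstract pair $(\g,\h)$ determines a unique LY-algebra coinciding with the relevant example, I would combine two facts: Dynkin's classification gives a single conjugacy class of embedding for each such $\h$, and Proposition \ref{pr:envueltasimple} shows that the only $\ad\h$-invariant complement of $\h$ in $\g$ is $\h^\perp$; since the constructions of Section \ref{Section:Examples} already realize every one of these pairs as a reductive decomposition of the corresponding exceptional Lie algebra, the induced products on $\m$ must agree, up to isomorphism, with those of the example. The main obstacle I anticipate is the $S$-subalgebra analysis: there is no combinatorial shortcut analogous to Borel--de Siebenthal, so for each non-simple maximal $S$-subalgebra in Dynkin's tables one must branch the adjoint representation to decide irreducibility of $\m$ and, for the survivors, to pin down the tensor factors as $\cO_0$, $\cQ_0$ and the trace-zero part of a degree three Jordan algebra.
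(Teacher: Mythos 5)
Your proposal is correct and reaches the same list, but it reroutes two steps of the paper's argument. First, the paper does not split into regular versus $S$-subalgebra cases; it splits on whether the binary product vanishes. For the trivial case it does not enumerate maximal-rank subalgebras at all: it quotes Faulkner's classification of irreducible Lie triple systems with exceptional enveloping algebra \cite{Fau80} to get the five triples $(\g,\h,\m)$ directly, and only then invokes \cite[Theorems 2.9 and 2.30]{Eld06} to recognize $\m$ as $V\otimes\cT_\cJ$. Your Borel--de Siebenthal enumeration plus branching is a legitimate substitute (and the two case splits are equivalent here, since by Lemma \ref{Ssubalgebras} a maximal-rank inner derivation algebra forces trivial binary product, and among the exceptional algebras no symmetric decomposition with non-simple semisimple even part arises from an outer involution), but note that to endow the second tensor factor $W$ with the triple product \eqref{productosimplectico} you still need the result of \cite{Eld06} that such a $\Z_2$-grading induces a simple symplectic triple system structure on $W$ -- Table \ref{ta:symplecticsquare} alone only matches types and dimensions. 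Second, for cases (ii) and (iii) the paper does not appeal to conjugacy of embeddings: after the dimension count fixes the $\h$-module structure of $\m$, it computes the spaces $\Hom_{\Der\cA}(\cA_0\otimes\cA_0,-)$ and $\Hom_{\Der\cJ}(\cJ_0\otimes\cJ_0,-)$ and shows the Jacobi identity forces the bracket \eqref{finalclasicaTits}, i.e.\ the Tits construction, up to a scalar; the case $\cJ=H_3(\cK)$ needs separate care because there the relevant Hom-spaces are two-dimensional and anticommutativity must be used to select the right symmetric/skew combination. Your alternative -- uniqueness of the conjugacy class of each maximal $S$-subalgebra in Dynkin's tables together with the uniqueness of the invariant complement from Proposition \ref{pr:envueltasimple} -- is cleaner and also valid, but it trades the paper's self-contained Hom-space computation for an extra input from \cite{Dyn} that you would need to verify explicitly for each of the five surviving pairs.
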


\begin{proof}
Following (\ref{condicionescasonosimpleI}), we must find reductive
decompositions $\g=\h\oplus\m$ with $\g$ exceptional simple, $\h$
semisimple but not simple and $\m$ irreducible. In case the binary
product is trivial, $\m$ is an irreducible Lie triple system. Up to
isomorphism, these triple systems fit into one of the following
$(\g(\m), D(\m,\m), \m)$ possibilities (see \cite{Fau80}):
$(G_2,A_1\times A_1, V(\lambda_1)\otimes V(3\mu_1)), (F_4,A_1\times
C_3, V(\lambda_1)\otimes V(\mu_3)), (E_6,A_1\times A_5,
V(\lambda_1)\otimes V(\mu_3)), (E_7,A_1\times D_6,
V(\lambda_1)\otimes V(\mu_6)), (E_8,A_1\times E_7,
V(\lambda_1)\otimes V(\mu_7))$. In the above list, $V(\lambda)
\otimes V(\mu)$ indicates the irreducible module structure of $\m$,
described by means of the fundamental weights $\lambda_i$ and $
\mu_i$ relative to  fixed Cartan subalgebras in each component of
$\h=L_1\times L_2$. The notation follows \cite{Hum72}. In all these
cases, $\g$ is a $\Z_2$-graded simple Lie algebra in which the odd
part contains a $3$-dimensional simple ideal of type $A_1$ for which
the even part is a sum of copies of a 2-dimensional irreducible
module. Identifying $A_1$ and $V(\lambda_1)$ with $\frsp(V)$ and $V$
respectively, for a two dimensional vector space $V$ endowed with a
nonzero skew-symmetric bilinear form,  the following general
description for these reductive decompositions follows:
\begin{equation}\label{eq:envueltasimplecticos}
\g= \frsp(V)\oplus \s \oplus (V\otimes \cT)
\end{equation}
where $\s$ is a simple Lie algebra. Then, Theorem 2.9 in
\cite{Eld06} shows that $\cT$ is endowed with a structure of a
simple symplectic triple system obtained from the Lie bracket of $\g$
for which $\s=\Inder(\cT)$. It follows that $\g$ is the Lie algebra
$\g(\cT)$ in \eqref{eq:gsymplectic}. An inspection of the
classification of the simple symplectic triple systems displayed in
\cite[Theorem 2.30]{Eld06} shows that the only possibilities for
$\cT$ are those given in Example \ref{LY:symplectic}. Thus item (i)
is obtained.

Now let us assume that the binary product is not trivial. From Lemma
\ref {Ssubalgebras}, it follows that $\h$ is a maximal semisimple
$S$-subalgebra of $\g$. Because of \cite[Theorem 14.1]{Dyn}, there
exist only eight possible pairs $(\g,\h)$ with $\h$ not simple and
$\g$ exceptional: $(F_4, G_2\oplus A_1)$, $(E_6, G_2\oplus A_2)$,
$(E_7, G_2\oplus C_3)$, $(E_7, F_4\oplus A_1)$, $(E_7, G_2\oplus
A_1)$, $(E_7, A_1\oplus A_1)$, $(E_8, G_2\oplus F_4)$, $(E_8,
A_2\oplus A_1)$. Now, the irreducible and nontrivial action of $\h$
on $\m$ implies that this is a tensor product $\m=V(\lambda)\otimes
V(\mu)$ with $V(\lambda)$, $V(\mu)$ irreducible modules of nonzero
dominant weights $\lambda$ and $\mu$ for each one of the simple
components in $\h$. Computing dimensions and possible irreducible
modules of the involved algebras, the following  descriptions of
$\m$, as a module for $\h$ are obtained:
\begin{description}
\settowidth{\labelwidth}{XX}%
\setlength{\leftmargin}{30pt}
\item[$(F_4,G_2\oplus A_1)$]
Here $\dim \m=52-(14+3)=35=7\times 5$. The only possibility for $\m$
is to be the tensor product of the seven dimensional irreducible
module for $G_2$ and the five dimensional irreducible module for
$A_1$: $\m=V(\lambda_1)\otimes V(4\mu_1)$.

\item[$(E_6,G_2\oplus A_2)$]
Here $\dim\m=78-(14+8)=56$. The only possibility for $\m$ is to be
the tensor product of the seven dimensional irreducible module for
$G_2$ and the adjoint module for $A_2$: $\m=V(\lambda_1)\otimes
V(\mu_1+\mu_2)$.

\item[$(E_7,G_2\oplus C_3)$]
Here $\dim\m=133-(14+21)=98$. The only possibility for $\m$ is to be
the tensor product of the seven dimensional irreducible module for
$G_2$ and a fourteen dimensional module for $C_3$:
$\m=V(\lambda_1)\otimes V(\mu_2)$. (The weight $\mu_3$ for $C_3$
cannot occur as this module is not self dual.)

\item[$(E_7,F_4\oplus A_1)$]
Here $\dim\m=133-(52+3)=78$. The only possibility for $\m$ is to be
the tensor product of the twenty six dimensional irreducible module
for $F_4$ and the adjoint module for $A_1$: $\m=V(\lambda_4)\otimes
V(2\mu_1)$.

\item[$(E_8,G_2\oplus F_4)$]
Here $\dim\m=248-(14+52)=182$. The only possibility for $\m$ is to
be the tensor product of the seven dimensional irreducible module
for $G_2$ and the twenty six dimensional module for $F_4$:
$\m=V(\lambda_1)\otimes V(\mu_4)$.

\item[$(E_7, G_2\oplus A_1)$]
Here $\dim\m=133-(14+3)=116=2^2\times 29$. As $G_2$ has no
irreducible modules of dimension $2$, $4$, $29$ or $58$, this case
is not possible.

\item[$(E_7, A_1\oplus A_1)$]
Here $\dim\m=133-(3+3)=127$. Since $127$ is prime, there is no
possible factorization.

\item[$(E_8, A_2\oplus A_1)$]
Here $\dim\m=248-(8+3)=237=3\times 79$. As $A_2$ has no irreducible
module of dimension $79$ and its modules of dimension $3$ are not
selfdual, this case is impossible too.

\end{description}
 Note that the possible reductive decompositions above
fit exactly into the Classical Tits Construction of exceptional Lie
algebras given in Example \ref{ex:ClassicalTits}. By identifying
$G_2$ with  $\Der(\cO)$ and $V(\lambda_1)$ with $\cO_0$, and $F_4$
with $\Der(H_3(\cO))$ and  $V(\lambda_4)$ with $H_3(\cO)_0$, the
case $(E_8,G_2\oplus F_4)$ corresponds to $\cT(\cO,H_3(\cO))$. Also,
with the identifications $A_1\simeq\Der H_3(k)$ and $V(4\mu_1)\simeq
H_3(k)_0$, $A_2\simeq\Der H_3(\cK)$ and $V(\mu_1+\mu_2)\simeq
H_3(\cK)_0$ (recall $\cK=k\times k$), $C_3\simeq \Der H_3(\cQ)$ and $V(\mu_2)\simeq
H_3(\cQ)_0$, the cases $(F_4, G_2\oplus A_1)$, $(E_6, G_2\oplus
A_2)$ and $(E_7, G_2\oplus C_3)$ are given by $\cT(\cO,\J)$ with
$\J=H_3(k)$, $H_3(\cK)$ or $H_3(\cQ)$. Finally, the case
$(E_7,F_4\oplus A_1)$ corresponds to $\cT(\cQ,H_3(\cO))$ under the
identifications $F_4\simeq \Der H_3(\cO)$ and $V(\lambda_4)\simeq
H_3(\cO)_0$, $A_1\simeq\Der \cQ$ and $V(2\mu_1)\simeq \cQ_0$.

On the other hand, if $\cA$ denotes either the algebra of
quaternions or octonions,  the subspaces $\Hom_{\Der
\cA}(\cA_0\otimes \cA_0, \Der \cA)$, $\Hom_{\Der\cA}(\cA_0\otimes
\cA_0, k)$ and $\Hom_{\Der \cA}(\cA_0\otimes \cA_0, \cA_0)$ are
spanned by  $a\otimes b\mapsto D_{a,b}$,
$a\otimes b\mapsto \tr(ab)$ and $a\otimes b\mapsto [a,b]$ respectively, where $D_{a,b}$ is defined in \eqref{eq:Dab} and $\tr(a)$ is the
trace form, while if $\cJ$ denotes one of the Jordan algebras
$H_3(k)$, $H_3(\cQ)$, or $H_3(\cO)$, the subspaces $\Hom_{\Der
\cJ}(\cJ_0\otimes \cJ_0, \Der \cJ)$, $\Hom_{\Der\cJ}(\cJ_0\otimes
\cJ_0, k)$ and $\Hom_{\Der \cJ}(\cJ_0\otimes \cJ_0, \cJ_0)$ are
spanned by
$x\otimes y \mapsto d_{x,y}$, $x\otimes y \mapsto T(xy)$ and $x\otimes y \mapsto x\star y=x\bullet y-\frac 13 T(xy)1$, with
$d_{x,y}$ as in \eqref{eq:dxy} and $T(x)$ the generic trace. Then,
by imposing the Jacobi identity, it is easily checked that, up to
scalars, there exists only one way to introduce a Lie product in the
vector space $\bigl(\Der \cA \oplus \Der \J\bigr) \oplus
\bigl(\cA_0\otimes \J\bigr)$, for $\cA=\cQ$ or $\cA=\cO$, with the natural actions of the
derivation algebras on $\cA$ and $\J$. This product is given by
\begin{equation}\label{finalclasicaTits}
[a\otimes x, b\otimes y] =\frac {\alpha^2}3T(xy)D_{a,b} +
2\alpha^2\tr(ab)d_{x,y}+\alpha[a,b]\otimes x\star y
\end{equation}
where $\alpha \in k$. The resulting algebras for the same
ingredients and different nonzero scalars $\alpha$ are all
isomorphic and  hence isomorphic to the Classical Tits Construction
$\cT(\cO, \J)$  with $\J \neq H_3(\cK)$, or $\cT(\cQ,H_3(\cO))$.

For $\J=H_3(\cK)$ (which is isomorphic to the algebra $\Mat_3(k)$
with the symmetrized product), $\J_0$ is isomorphic to the adjoint
module $\Der \J$, and hence the subspaces $\Hom_{\Der\J}(\J_0\otimes
\J_0, \J_0)$ and $\Hom_{\Der\J}(\J_0\otimes \J_0, \Der \J)$ have
dimension $2$, being spanned by the symmetric product $x\star y$ and
the skew product $d_{x,y}$. Since the products in $\Hom_{\Der
\cO}(\cO_0\otimes \cO_0, \cO_0)$ are skew and symmetric in
$\Hom_{\Der \cO}(\cO_0\otimes \cO_0, k)$, the anticommutativity
imposed in the construction of a Lie algebra on the vector space
$\bigl(\Der \cO \oplus \Der \J\bigr) \oplus \bigl(\cO_0\otimes
\J\bigr)$ with the natural actions of the derivation algebras on
$\cO$ and $\J$,  can only be guaranteed if a symmetric product  in
$\Hom_{\Der\J}(\J_0\otimes \J_0, \J_0)$ and a skew-symmetric one in
$\Hom_{\Der\J}(\J_0\otimes \J_0, \Der \J)$ are used. This yields
again the Lie product in \eqref{finalclasicaTits} and, up to
isomorphism, the corresponding Classical Tits Construction
$\cT(\cO,H_3(\cK))$ given in Example \ref{ex:ClassicalTits}. This
provides cases (ii) and (iii) in the Theorem.
\end{proof}

\section*{Concluding remarks}

As mentioned in the Introduction, concerning the isotropy
irreducible homogeneous spaces, Wolf remarked in \cite{Wolf} that
only the irreducible homogeneous spaces $\mathbf{SO}(\dimens K)/\ad
K$ for an arbitrary compact simple Lie group follow a clear pattern.
These are related to the reductive pairs $(\frso(L),\ad L)$ for a
simple Lie algebra $L$, so $\ad L=\Der(L)=[\Der(L),\Der(L)]$, and
hence the reductive pair can be written as $(\frso(L),\Der(L))$.

The examples in Section \ref{Section:Examples} follow clear patterns
too. Moreover, a closer look at the classification of the non-simple
type irreducible LY-algebras shows that, apart from the irreducible
Lie triple systems and the exceptional cases that appear related to
the Classical Tits Construction in Theorem
\ref{th:irreduciblesexcepcionales}, there are two more classes, that
correspond to Examples \ref{ex:skewV1otimesV2} and
\ref{ex:slV1otimesslV2}.

Concerning the irreducible LY-algebras in Example
\ref{ex:skewV1otimesV2}, let $(V_1,\varphi_1)$ be a two dimensional
vector space endowed with a nonzero skew-symmetric bilinear form,
and let $(V_2,\varphi_2)$ be another vector space of dimension $\geq
3$ endowed with a nondegenerate $\epsilon$-symmetric bilinear form.
Then $T=V_1\otimes V_2$ is an irreducible Lie triple system, as in
Example \ref{ex:skewV1plusV2}, whose Lie algebra of derivations is
$\Der(T)=\frsp(V_1,\varphi_1)\oplus\anti(V_2,\varphi_2)$. Hence, the
reductive pair $(\g,\h)$ in Example \ref{ex:skewV1otimesV2} (or in
Theorem \ref{th:irreduciblesclasicas}, items (iv) and (v)), is
nothing else but
$\bigl(\anti(T,\varphi_1\otimes\varphi_2),\Der(T)\bigr)$.

Also, in Example \ref{ex:slV1otimesslV2} (or the first item in
Theorem \ref{th:irreduciblesclasicas}) two vector spaces $V_1$ and
$V_2$ of dimension $n_1$ and $n_2$ are considered. The tensor
product $V_1\otimes V_2$ can be identified to $k^{n_1}\otimes
k^{n_2}$ or to the space of rectangular matrices $V=\Mat_{n_1\times
n_2}(k)$. The pair $\cV=(V,V)$ is a Jordan pair (see \cite{Loos})
under the product given by $\{xyz\}=xy^tz+zy^tx$ for any $x,y,z\in
V$. The Lie algebra of derivations is
$\Der(\cV)=\frsl_{n_1}(k)\oplus\frsl_{n_2}(k)\oplus k$, which acts
naturally on $V$, and then its derived algebra is
$\Der_0(\cV)=[\Der(\cV),\Der(\cV)]=\frsl_{n_1}(k)\oplus\frsl_{n_2}(k)$.
Hence the reductive pair associated to the irreducible LY-algebra in
Example \ref{ex:slV1otimesslV2} is the pair
$\bigl(\frsl(V),\Der_0(\cV)\bigr)$.

This sort of patterns will explain most of the situations that arise
in the generic case \cite{forthcoming}.


\providecommand{\bysame}{\leavevmode\hbox
to3em{\hrulefill}\thinspace}
\providecommand{\MR}{\relax\ifhmode\unskip\space\fi MR }
\providecommand{\MRhref}[2]{%
  \href{http://www.ams.org/mathscinet-getitem?mr=#1}{#2}
} \providecommand{\href}[2]{#2}

\end{document}